\theoremstyle{plain}
\newtheorem*{theorem*}{Theorem}
\newtheorem{theorem}{Theorem}[section]
\newtheorem{claim}[theorem]{Claim}
\newtheorem{lemma}[theorem]{Lemma}
\newtheorem{proposition}[theorem]{Proposition}
\newtheorem{corollary}[theorem]{Corollary}
\theoremstyle{definition}
\newtheorem{definition}[theorem]{Definition}
\newtheorem{question}[theorem]{Question}
\theoremstyle{remark}
\def\mathunderaccent#1#2 {\let\theaccent#1\skewfactor#2
\mathpalette\putaccentunder}
\def\putaccentunder#1#2{\oalign{$#1#2$\crcr\hidewidth
\vbox to.2ex{\hbox{$#1\skew\skewfactor\theaccent{}$}\vss}\hidewidth}}
\def\smallbox#1{\leavevmode\thinspace\hbox{\vrule\vtop{\vbox
   {\hrule\kern1pt\hbox{\vphantom{\tt/}\thinspace{\tt#1}\thinspace}}
   \kern1pt\hrule}\vrule}\thinspace}
\DeclareMathOperator{\im}{Im}
\title{Fundamental groups and descriptive set theory}
\author{Fanxin Wu}
\address{Department of Mathematical Sciences, Rutgers University, Piscataway, NJ 08854}
\email{fw173@rutgers.edu}
\date{\today}
\subjclass[2020]{03E15, 54H05, 20F34} 
\keywords{analytic equivalence relations, fundamental group, Hawaiian earring}
\begin{document}

\maketitle

\begin{abstract}
We study the homotopy of loops in a fixed path-connected Polish space from a descriptive set-theoretic viewpoint. We show that many analytic equivalence relations arise this way, and many do not. We also study the ``free group'' over an equivalence relation.
\end{abstract}

\section{Introduction}

Descriptive set theory provides explanations to many phenomena in mathematics: why certain characterization problems are difficult \cite{becker1992descriptive}, why certain classification problems are difficult \cite{kechris1999new,gao2008invariant,thomas2003classification,kulikov2017non}, and why certain constructions cannot be made uniform \cite{thomas2011descriptive}. In this paper we explore the possibility of using descriptive set theory to explain why the fundamental groups of certain spaces are hard to describe.

Let $\mathbb{E}$ be the Hawaiian earring space. Its fundamental group is uncountable, and often considered too wild to describe explicitly. However, there does in fact exist a fairly concrete description of $\pi(\mathbb{E})$ as a certain subgroup of an inverse limit of free groups \cite{morgan1986van}. Translating this result into the language of equivalence relations, we see that the homotopy relation on the loop space of $\mathbb{E}$ is smooth. This motivates us to study the Borel complexity of homotopy relation on the loop space $\Omega X$ for a general path-connected Polish space $X$, which in some sense measures how hard it is to describe the fundamental group $\pi(X)$.

Our notations are mostly standard. For general background on descriptive set theory we refer to \cite{kechris2012classical,gao2008invariant}. The organization of the paper is as follows. In Section 1 we define the main object of interest, and discuss the example of the earring space $\mathbb{E}$. Section 2 contains limiting results on which analytic equivalence relations can be realized as homotopy of loops. Section 3 defines an operation $E\mapsto F(E)$ on analytic equivalence relations, and shows that $F(E)$ can always be realized as homotopy, building on a similar result of Becker concerning path-components. Section 4 gives some examples of $E$ that are bireducible with $F(E)$.

\begin{center}
\textbf{Acknowledgement}
\end{center}

I am grateful to Jeremy Brazas, Alexander Kechris and Simon Thomas for valuable discussions. After a draft of this paper was finished, I learned that some of the results also appeared in unpublished notes by Orestis Raptis. I thank Alexander Kechris for kindly sharing the notes.

\section{Homotopy of loops}

Suppose $X$ is Polish and path-connected. Fix any basepoint $x_0\in X$ and consider the loop space

\begin{center}
$\Omega X=\{f\in C([0,1],X):f(0)=f(1)=x_0\}$,
\end{center} 

which is closed in $C([0,1],X)$ and thus Polish. We consider the homotopy relation $\sim_X$ on $\Omega X$, namely two loops $f,g$ are homotopic if and only if there exists a continuous $F:[0,1]\times[0,1]\rightarrow X$ such that $F(\cdot,0)=f$, $F(\cdot,1)=g$, and $f(0,t)=f(1,t)=x_0$. Clearly this is an analytic equivalence relation, and the quotient set is the fundamental group $\pi(X)$. The usual proof that $\pi(X)$ does not depend on basepoint shows that the Borel complexity of $\sim_X$ does not depend on basepoint either.

It seems the first and only published result concerning $\sim_X$ is due to Shelah \cite{shelah1988can}.

\begin{theorem*}[Shelah]
Suppose $X$ is a compact Polish space that is path-connected and locally path-connected. If $X$ is semi-locally simply-connected (slsc), then $\pi(X)$ is finitely generated; if $X$ is not slsc, then $\Omega X$ has a perfect set of non-homotopic loops.
\end{theorem*}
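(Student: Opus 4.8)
The plan is to prove the two implications separately; the first is a soft compactness argument, and essentially all the content is in the second.

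For the slsc half I would first distill from compactness, local path-connectedness and semi-local simple connectivity a single scale $\lambda>0$ such that every loop in $X$ of diameter $<\lambda$ is null-homotopic in $X$: each point has a path-connected open neighborhood mapping trivially to $\pi_1(X)$, one shrinks it using local path-connectedness, and a finite subcover plus a Lebesgue-number argument makes $\lambda$ uniform. Then I would fix a finite $\epsilon$-net $P\ni x_0$ with $\epsilon\ll\lambda$, choose for each pair of nearby net points a short path joining them (using that a Peano continuum is uniformly path-connected, so nearby points are joined by paths of small diameter), choose a path $\alpha_i$ from $x_0$ to each $p_i$ along a spanning tree, and take the finitely many loops $\alpha_i\beta_{ij}\bar\alpha_j$ as candidate generators. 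Given an arbitrary loop $f$ at $x_0$, subdivide $[0,1]$ finely, replace each $f(t_l)$ by a net point via a short correction path, and note that on each subinterval the loop built from $f|_{[t_{l-1},t_l]}$, the correction paths and the chosen short path has diameter $<\lambda$, hence is null-homotopic; telescoping the corrections and inserting the $\alpha_i$ rewrites $[f]$ as a product of the candidate generators. The only mild subtlety is the connectivity of small balls, which is exactly what uniform path-connectedness of Peano continua handles.

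For the not-slsc half, fix a point $p$ at which slsc fails and a decreasing sequence $U_0\supseteq U_1\supseteq\cdots$ of path-connected open neighborhoods of $p$ with $\operatorname{diam}(U_n)\to 0$; failure of slsc yields, for each $n$, a loop in $U_n$ that is not null-homotopic in $X$, and path-connectedness of $U_n$ lets me base it at $p$: a loop $c_n\colon[0,1]\to U_n$ with $c_n(0)=c_n(1)=p$ and $[c_n]\neq 1$. Since the $U_n$ shrink, for every infinite $A=\{a_1<a_2<\cdots\}\subseteq\mathbb N$ the infinite concatenation $\delta_A:=c_{a_1}\cdot c_{a_2}\cdot c_{a_3}\cdots$ (the $k$-th block on $[1-2^{-(k-1)},1-2^{-k}]$, value $p$ at $1$) is a genuine element of $\Omega X$, and $A\mapsto\delta_A$ is continuous from $[\mathbb N]^\omega$ to $\Omega X$. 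So it is enough to find a Cantor set $\mathcal A\subseteq[\mathbb N]^\omega$ on which $A\neq A'$ implies $\delta_A\not\sim_X\delta_{A'}$: then $\{\delta_A:A\in\mathcal A\}$ is a homeomorphic copy of $2^\omega$ in $\Omega X$ made of pairwise non-homotopic loops. The engine for distinguishing is a rigidity lemma from compactness of $[0,1]^2$: if $H$ is a homotopy from $\delta_A$ to $\delta_{A'}$ fixing endpoints, then since $H$ is constant $=p$ on the right edge, the tube lemma gives, for every $n$, an $m$ (arbitrarily large) with $H\big([1-2^{-m},1]\times[0,1]\big)\subseteq U_n$. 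Cutting along $t=1-2^{-m}$ — a breakpoint of both $\delta_A$ and $\delta_{A'}$, so the middle vertical side of $H$ is a loop $\eta$ at $p$ — and reading the two resulting homotopy squares gives $[\eta]=\big(\prod_{k\le m}[c_{a_k}]\big)^{-1}\big(\prod_{k\le m}[c_{a'_k}]\big)$ from the left square and $[\eta]\in G_n$ from the right square, where $G_n:=\operatorname{im}\big(\pi_1(U_n,p)\to\pi_1(X,p)\big)$. Thus $\delta_A\sim_X\delta_{A'}$ forces, for every $n$ and a suitable $m$, the partial-product comparison $\big(\prod_{k\le m}[c_{a_k}]\big)^{-1}\big(\prod_{k\le m}[c_{a'_k}]\big)\in G_n$. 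The chain $G_0\supseteq G_1\supseteq\cdots$ consists of nontrivial subgroups, and if it is strictly decreasing I choose the $c_n$ with $[c_n]\in G_n\setminus G_{n+1}$; then at the first index where $A$ and $A'$ differ, with $j$ the smaller of the two entries there, a short computation (every $[c_{a_k}]$ with $a_k\ge j+1$ lies in $G_{j+1}$, while $[c_j]\notin G_{j+1}$) shows the comparison element lies in a coset of $G_{j+1}$ other than $G_{j+1}$, contradicting the containment with $n=j+1$. So in this case $A\mapsto\delta_A$ is already injective up to homotopy, and restricting to any Cantor subset of $[\mathbb N]^\omega$ finishes.

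The main obstacle is the remaining case, where the images $G_V$ of small neighborhoods of $p$ stabilize to a fixed subgroup $G_\infty\neq 1$: no choice of $c_n$ escapes $G_\infty$, infinite concatenations of loops representing a fixed class of $G_\infty$ may genuinely become homotopic (this is what happens for harmonic-archipelago-type spaces), and the partial-product comparison above becomes vacuous. I expect the real work of the proof to lie here, and to require a finer family than plain concatenations — one encoding a real into the ``infinite word'' near $p$ in a cancellation-proof way — or a reduction to a subspace on which the images do not stabilize.
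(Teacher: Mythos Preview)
The paper does not prove this theorem: it is quoted as a result of Shelah with a citation, and the only ``proof'' offered is a one-sentence sketch of the second alternative (``around some point there are smaller and smaller non-trivial loops, so $X$ locally looks like the Hawaiian earring space $\mathbb{E}$, and one can explicitly construct many non-homotopic loops''). Your proposal goes considerably further than this, and the first half is correct: the Lebesgue-number extraction of a uniform scale $\lambda$, the uniform local arcwise connectedness of a Peano continuum, and the edge-path rewriting over a finite net are exactly the standard route to finite generation.

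The gap you yourself flag in the second half is genuine and is the heart of Shelah's argument. When the images $G_n=\operatorname{im}(\pi_1(U_n,p)\to\pi_1(X,p))$ stabilize at a nontrivial $G_\infty$, your partial-product comparison is vacuous, and plain infinite concatenations $\delta_A$ can collapse: in harmonic-archipelago-type spaces all $[c_n]$ coincide, and your map $A\mapsto [\delta_A]$ need not be injective. (A small side remark: even in the strictly decreasing case your phrasing ``a coset of $G_{j+1}$'' is imprecise since $G_{j+1}$ is not normal; what you actually get is that the comparison lies in $G_{j+1}\,[c_j]^{-1}\,G_{j+1}$, and this double-coset missing $G_{j+1}$ is equivalent to $[c_j]\notin G_{j+1}$, so the argument still goes through.)

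Shelah's original proof, and the later simplifications (Pawlikowski; see also the Cannon--Conner treatment), do not try to make $A\mapsto\delta_A$ injective directly. One route is a contradiction from countability: assuming $\pi_1(X)$ has fewer than $2^{\aleph_0}$ elements, a pigeonhole over a richer family of infinite words (e.g.\ alternating the $c_n$ with their inverses, or indexing by a tree so that each node records a finite choice) produces two homotopic words whose tube-lemma comparison forces some $[c_n]=1$. Another route builds the perfect set by a Cantor-scheme of finite words, choosing at each split a pair of extensions that are provably non-homotopic using only that $[c_n]\neq 1$ and the tube-lemma cut; this avoids ever mentioning the $G_n$. Either way, the ``cancellation-proof encoding'' you anticipate is exactly what is needed, and it is not a routine step.
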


Note that if $\pi(X)$ is finitely generated then in particular it is countable, so $\sim_X$ has countably many classes. In the terminology of Borel reducibility, Shelah's theorem implies that for $X$ as above, either $\sim_X\leq_B\mathrm{id}(\omega)$ or $\mathrm{id}({}^\omega2)\sqsubseteq_c \sim_X$ (compare with Burgess trichotomy).

The first alternative in Shelah's theorem has been strengthened \cite{cannon2006fundamental,dydak2011alternate}: if $X$ is slsc then $\pi(X)$ is in fact finitely presented. The proof of the second alternative roughly proceeds as follows: if $X$ is not slsc, then around some point there are smaller and smaller non-trivial loops, so $X$ locally looks like the Hawaiian earring space $\mathbb{E}$, and one can explicitly construct many non-homotopic loops.

The Hawaiian earring space $\mathbb{E}$ is the union of the circles $(x-\frac{1}{n})^2+y^2=(\frac{1}{n})^2$ for $n\geq 1$, and is a standard example of a space that is not slsc. An alternative definition is as follows. Let $\mathbb{E}_n=\bigvee_{k\leq n}S^1$ be the wedge sum of $n$ circles, so $\mathbb{E}_{n+1}$ naturally maps onto $\mathbb{E}_n$ by collapsing the $n+1$-th circle to the base point. $\mathbb{E}$ can be identified with the inverse limit $\varprojlim_n \mathbb{E}_n$.

The ``earring group'' $\pi(\mathbb{E})$ is often considered exotic, but nevertheless admits the following concrete description. Universal property of inverse limit gives rise to a natural map $\varphi:\pi(\mathbb{E})\rightarrow\varprojlim_n \pi(\mathbb{E}_n)=\varprojlim_n F_n$, where $F_n$ is the free group on $n$ generators $\{g_1,\dots,g_n\}$, and the map from $F_{n+1}$ to $F_n$ simply deletes all appearances of $g_{n+1}$ and reduces the word. Using nontrivial results from continuum theory and shape theory, it can be shown that $\varphi$ is injective \cite{morgan1986van}; see also \cite{cannon2000combinatorial}. This is a special case of the more general phenomenon of \textit{shape injectivity}. Moreover, the image of $\varphi$ is quite simple: it is the subset of $\varprojlim_n F_n$ consisting of all infinite sequences $(w_n:n<\infty)$ of reduced words in the $g_n$'s such that for each $k$, the number of times $g_k$ appears in $w_n$ is eventually constant as $n\rightarrow\infty$ (note that it is non-decreasing).

In descriptive set-theoretic terms, the above discussion shows that $\sim_\mathbb{E}$ is not only smooth, but \textit{faithfully smooth}. The following definition is taken from Gao \cite{gao2008invariant}.

\begin{definition}
Let $X,Y$ be standard Borel spaces and $E,F$ be equivalence relations on $X,Y$, respectively. A Borel reduction $f:X\rightarrow Y$ is \textit{faithful} if for any $E$-invariant Borel $A\subseteq X$, the $F$-saturation of $f(A)$ is Borel. $E$ is \textit{faithfully smooth} if there exists a faithful Borel reduction to $\mathrm{id}_Z$ for some Polish space $Z$.
\end{definition}

\begin{proposition}\label{faithfully_smooth}
Let $X$ be a standard Borel space and $E$ be an equivalence relation on $E$.

(i) A Borel reduction $f:X\rightarrow\mathbb{R}$ of $E$ to $\mathrm{id}_Y$ is faithful if and only if $f(X)$ is Borel.

(ii) If $E$ is faithfully smooth, then every reduction of $E$ to some $\mathrm{id}_Y$ is faithful. 
\end{proposition}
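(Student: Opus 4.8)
The plan is to reduce everything to two standard facts: Suslin's theorem ($\Sigma^1_1\cap\Pi^1_1=\Delta^1_1$) and the Lusin--Souslin theorem (a Borel injection sends Borel sets to Borel sets).

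For part (i), the first observation is that each $\mathrm{id}_\mathbb{R}$-class is a singleton, so the $\mathrm{id}_\mathbb{R}$-saturation of a set is the set itself; hence here ``faithful'' means precisely that $f(A)$ is Borel whenever $A\subseteq X$ is $E$-invariant and Borel. The implication ``faithful $\Rightarrow$ $f(X)$ Borel'' is then immediate, taking $A=X$. For the converse, assume $f(X)$ is Borel and let $A$ be $E$-invariant Borel. Since $f$ is a reduction and $A$ is $E$-invariant, no point of $A$ can share an $f$-value with a point of $X\setminus A$, so $f(A)$ and $f(X\setminus A)$ are disjoint with union $f(X)$; thus $f(A)=f(X)\setminus f(X\setminus A)$. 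Both $f(A)$ and $f(X\setminus A)$ are analytic, so $f(A)$ is simultaneously analytic and (as a Borel set minus an analytic set) co-analytic, hence Borel by Suslin's theorem. Nothing here uses the target $\mathbb{R}$ beyond its being standard Borel, so the statement holds verbatim for reductions into any standard Borel space; I will use this generalized form in part (ii).

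For part (ii), fix a faithful Borel reduction $g:X\to Z$ of $E$ to $\mathrm{id}_Z$ with $Z$ Polish, witnessing faithful smoothness; by the general form of part (i), $g(X)$ is Borel. Let $h:X\to W$ be an arbitrary Borel reduction of $E$ to $\mathrm{id}_W$; by part (i) once more it suffices to show $h(X)$ is Borel. From $g(x)=g(y)\iff x\mathrel{E}y\iff h(x)=h(y)$ one gets a well-defined bijection $\theta:g(X)\to h(X)$ with $h=\theta\circ g$. Its graph $\{(g(x),h(x)):x\in X\}$ is analytic, and a function (between standard Borel spaces, or on a Borel subset of one) with analytic graph is Borel measurable: the preimage of a Borel set is analytic by projecting the graph, and co-analytic by complementing within the Borel domain $g(X)$. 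Hence $\theta$ is a Borel injection on the Borel set $g(X)$, and the Lusin--Souslin theorem yields that $h(X)=\theta(g(X))$ is Borel, as required.

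The only step that needs a moment's care is verifying that $\theta$ is Borel from the analyticity of its graph; after that, the transfer of Borelness from $g(X)$ to $h(X)$ via Lusin--Souslin is automatic, and I do not anticipate any real obstacle. It is also worth recording as a byproduct that $\theta$ is then a Borel isomorphism $g(X)\cong h(X)$, so for a faithfully smooth $E$ all faithful reductions to identity relations have Borel-isomorphic images.
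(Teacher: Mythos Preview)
Your proof is correct and follows essentially the same route as the paper: part (i) is identical (Suslin on the disjoint analytic decomposition $f(X)=f(A)\sqcup f(X\setminus A)$), and in part (ii) both you and the paper define the natural injection from the image of the faithful reduction to the image of the arbitrary one and then apply Lusin--Souslin. The only minor variation is in verifying Borelness of this injection: the paper observes directly that $\theta^{-1}(B)=g(h^{-1}(B))$ is the image under the faithful $g$ of an $E$-invariant Borel set, whereas you argue via analyticity of the graph and Suslin again; both are one-line computations and neither gains anything substantial over the other.
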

\begin{proof}
(i) The forward direction is clear. If $f(X)$ is Borel and $A\subseteq X$ is $E$-invariant, then $f(X)$ is the disjoint union of $f(A)$ and $f(X\setminus A)$, both analytic, so by Suslin's theorem $f(A)$ is Borel.

(ii) Suppose $f$ is a faithful reduction of $E$ to $\mathrm{id}_Y$ and $g$ is a reduction of $E$ to some $\mathrm{id}_Z$. There is a natural map $h:Y\supseteq f(X)\rightarrow Z$ sending $f(x)$ to $g(x)$, so $h\circ f=g$. Note that $h$ is Borel: if $B\subseteq Z$ is Borel and $y=f(x)$, then $h(y)\in B$ iff $g(x)\in B$, which shows $h^{-1}(B)$ is Borel. Clearly $h$ is injective, so $g(X)=h(f(X))$ is Borel.
\end{proof}

\begin{proposition}
$\sim_{\mathbb{E}}$ is faithfully smooth.
\end{proposition}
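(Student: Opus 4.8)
The plan is to apply Proposition~\ref{faithfully_smooth}(i): I will produce a Borel reduction of $\sim_{\mathbb E}$ to an identity relation whose range is Borel, namely the ``loop-level'' version of the map $\varphi$ recalled above. For each $n$ let $p_n\colon\mathbb E\to\mathbb E_n$ be the retraction that collapses every circle of index $>n$, and define $f\colon\Omega\mathbb E\to\prod_n F_n$ by letting the $n$-th coordinate of $f(\gamma)$ be the reduced word in $F_n=\pi_1(\mathbb E_n,x_0)$ represented by the loop $p_n\circ\gamma$. Since $p_n$ is the composite of $p_{n+1}$ with the circle-collapsing map $\mathbb E_{n+1}\to\mathbb E_n$, the sequence $f(\gamma)$ lies in $\varprojlim_n F_n$, and $f$ factors as $\Omega\mathbb E\to\pi(\mathbb E)\xrightarrow{\ \varphi\ }\varprojlim_n F_n$ with the first arrow the quotient by $\sim_{\mathbb E}$. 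Hence $f(\gamma)=f(\gamma')$ if and only if $\varphi([\gamma])=\varphi([\gamma'])$, so $f$ is a reduction of $\sim_{\mathbb E}$ to $\mathrm{id}$ precisely because $\varphi$ is injective --- this is the shape injectivity of $\mathbb E$ \cite{morgan1986van,cannon2000combinatorial}.

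Next I would check that $f$ is Borel; in fact it is continuous. It suffices to show that each coordinate map $f_n\colon\Omega\mathbb E\to F_n$ is locally constant, $F_n$ carrying the discrete topology. Here I invoke covering space theory for the finite wedge $\mathbb E_n$: its universal cover $q\colon T_n\to\mathbb E_n$ is the Cayley graph of $F_n$ on the generators $g_1,\dots,g_n$, a \emph{locally finite} tree (every vertex has degree $2n$), and $q^{-1}(x_0)$ is exactly the vertex set of $T_n$, a closed, uniformly discrete subspace which is canonically identified with $F_n$ once a lift $\tilde x_0$ of $x_0$ is fixed. For a loop $\delta$ at $x_0$, the endpoint $\tilde\delta(1)$ of its lift starting at $\tilde x_0$ equals $[\delta]\cdot\tilde x_0$ and so recovers $[\delta]\in F_n$. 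Since path lifting in a covering space depends uniformly on the path, $\delta\mapsto\tilde\delta(1)\in T_n$ is continuous; as it is valued in the discrete set $q^{-1}(x_0)$ it is locally constant, and precomposing with the continuous map $\gamma\mapsto p_n\circ\gamma$ shows $f_n$ is locally constant. Thus $f$ is continuous, hence Borel.

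It remains to see that $\mathrm{ran}(f)$ is Borel. By the discussion preceding the statement, $\mathrm{ran}(f)=\mathrm{ran}(\varphi)$ is the set $S$ of coherent sequences $(w_n)\in\varprojlim_n F_n$ such that for every $k$ the number $\#_k(w_n)$ of occurrences of $g_k$ in $w_n$ is eventually constant in $n$; as this count is non-decreasing, ``eventually constant'' is equivalent to ``bounded''. Now $\varprojlim_n F_n$ is closed in the Polish space $\prod_n F_n$, since each compatibility condition relating two consecutive coordinates is clopen; and for fixed $k$ and $M$ the set $\{(w_n):\#_k(w_n)\le M\text{ for all }n\}$ is closed. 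Taking the union over $M$ and then the intersection over $k$ exhibits $S$ as an $F_{\sigma\delta}$ subset of $\prod_n F_n$, in particular Borel. So $f$ is a Borel reduction of $\sim_{\mathbb E}$ to $\mathrm{id}_{\prod_n F_n}$ with Borel range; the short argument proving Proposition~\ref{faithfully_smooth}(i) goes through verbatim with $\mathbb R$ replaced by any standard Borel space (and in any case $\prod_n F_n$ is itself Polish), so $f$ is faithful and $\sim_{\mathbb E}$ is faithfully smooth.

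The one step requiring genuine care is the Borelness --- here, continuity --- of $f$: that the reduced word read off a loop in $\mathbb E_n$ varies Borel-measurably with the loop. Everything else is either imported wholesale from the cited continuum- and shape-theoretic results (injectivity of $\varphi$ and the description of its range) or a routine pointclass computation. If one prefers to bypass covering spaces in that step, one can read the word off directly: on each component interval of $[0,1]\setminus(p_n\gamma)^{-1}(x_0)$ the loop $p_n\gamma$ traverses a single circle with a net winding number, all but finitely many of these windings vanish (the nonzero ones force the image of that interval to have diameter bounded below, while $p_n\gamma$ is uniformly continuous), the word $\prod_j g_{i_j}^{e_j}$ read off in order from the nonzero ones and then reduced equals $f_n(\gamma)$, and one verifies this is locally constant in $\gamma$.
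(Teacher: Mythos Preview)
Your proof is correct and follows essentially the same strategy as the paper: define the natural map $\Omega\mathbb E\to\varprojlim_n F_n$, invoke shape injectivity to see it is a reduction, compute the image as $\mathbf{\Pi^0_3}$, and apply Proposition~\ref{faithfully_smooth}(i). The only notable difference is in how Borelness of the coordinate maps $f_n$ is established: you show they are actually continuous via path-lifting in the universal cover of $\mathbb E_n$, whereas the paper observes more briefly that each fiber $f_n^{-1}(w)$ is analytic (being a homotopy class in a fixed Polish space) and hence Borel because $F_n$ is countable. Your argument gives a sharper conclusion (continuity rather than mere Borelness) at the cost of a little more machinery; the paper's is quicker.
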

\begin{proof}
Let $\psi_n$ be the natural map from $\Omega\mathbb{E}$ to $\pi(\mathbb{E}_n)=F_n$; each fiber is analytic, and thus Borel since $F_n$ is countable. Let $\psi$ be the induced map from $\Omega\mathbb{E}$ to the Polish space $\varprojlim_n F_n$. As mentioned above, the induced map $\varphi:\pi(\mathbb{E})\rightarrow\varprojlim_n F_n$ is injective, and the image consists of those sequences $(\dots,w_3,w_2,w_1)$ of coherent reduced words satisfying the following: for each $k$, the number of times $g_k$ appears in $w_n$ is eventually constant as $n\rightarrow\infty$. This shows $\im(\psi)$ is $\mathbf{\Pi^0_3}$, in particular Borel. Thus $\psi$ is a faithful reduction by Proposition \ref{faithfully_smooth}.
\end{proof}

More generally, $\sim_X$ is smooth whenever shape injectivity holds, e.g., when $X$ is one-dimensional \cite{eda1998fundamental} or planar \cite{fischer2005fundamental}, in which case $\sim_X$ is faithfully smooth if and only if the shape image is Borel.

\section{Obstruction}

In the next section we show that a rich class of analytic equivalence relations can be realized as $\sim_X$ for suitable $X$. Here we show that there are also analytic equivalence relations that cannot be realized this way.

\begin{lemma}\label{obstruction}
Suppose $X$ is a standard Borel space and $E$ is an analytic equivalence relation on $X$ with at least two equivalence classes, such that for any two Borel reductions $h_1,h_2$ of $E$ to itself there is $x\in X$ for which $h_1(x)Eh_2(x)$, then $E$ is not Borel bireducible with $\sim_Y$ for any path-connected Polish space $Y$.
\end{lemma}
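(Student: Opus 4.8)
The plan is to establish the contrapositive in a strong form: if $E$ is Borel bireducible with $\sim_Y$ for some path-connected Polish $Y$, I will exhibit two Borel reductions $h_1,h_2$ of $E$ to itself such that $h_1(x)$ and $h_2(x)$ are $E$-inequivalent for \emph{every} $x\in X$. The point is that the group structure on $\pi(Y)$ always supplies such a pair of self-reductions for $\sim_Y$ itself, realized by honest continuous maps on the loop space, which can then be pulled back along the bireduction.

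First I would dispose of the degenerate case: if $\pi(Y)$ is trivial then $\sim_Y$ has one class while $E$ has at least two, so they are not bireducible. Hence I may assume $\pi(Y)\neq 1$ and fix a loop $\alpha\in\Omega Y$ with $[\alpha]\neq 1$. Define $r_1=\mathrm{id}_{\Omega Y}$ and $r_2\colon\Omega Y\to\Omega Y$ by $r_2(f)=\alpha * f$, the concatenation of $\alpha$ followed by $f$ (again a loop at the basepoint). Since loop concatenation $\Omega Y\times\Omega Y\to\Omega Y$ is continuous, $r_2$ is continuous, hence Borel. On $\pi(Y)$ the map $r_2$ induces a translation by $[\alpha]$, which is a bijection of $\pi(Y)$; in particular $[\alpha * f]=[\alpha * g]$ iff $[f]=[g]$, so $r_2$ is a Borel reduction of $\sim_Y$ to itself, and trivially so is $r_1$. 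The crucial point is that $r_1$ and $r_2$ are \emph{nowhere} equivalent: $r_1(f)\sim_Y r_2(f)$ would force $[\alpha]=1$, against the choice of $\alpha$.

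Next I would transport this configuration. Let $\phi\colon X\to\Omega Y$ and $\psi\colon\Omega Y\to X$ be Borel reductions witnessing the bireducibility, and put $h_i=\psi\circ r_i\circ\phi$ for $i=1,2$. Each $h_i$ is a composite of Borel reductions, hence a Borel reduction of $E$ to itself. For any $x\in X$, since $\psi$ is a reduction we have $h_1(x)\mathrel{E}h_2(x)$ iff $r_1(\phi(x))\sim_Y r_2(\phi(x))$, which fails by the preceding paragraph. Thus $h_1,h_2$ are Borel reductions of $E$ to itself with $\neg\bigl(h_1(x)\mathrel{E}h_2(x)\bigr)$ for all $x\in X$, contradicting the hypothesis on $E$.

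I do not expect a real obstacle here; the argument is short and its content is entirely conceptual — recognizing that a loop space always carries, via loop concatenation, a continuous self-map inducing a translation of $\pi(Y)$, i.e.\ a reduction of $\sim_Y$ to itself that is fixed-point-free modulo $\sim_Y$, which is exactly the behavior the hypothesis on $E$ forbids. The only points needing (routine) checking are the continuity of $f\mapsto\alpha * f$ and that it is a reduction, both immediate from the standard properties of loop concatenation and the group axioms for $\pi(Y)$, together with the bookkeeping that a composite of Borel reductions is a Borel reduction.
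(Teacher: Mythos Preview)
Your argument is correct and matches the paper's proof essentially verbatim: both exploit that left concatenation by a fixed loop gives a Borel self-reduction of $\sim_Y$, and that two non-homotopic loops yield self-reductions that are nowhere $\sim_Y$-equivalent, which is then pulled back through the bireduction. The only cosmetic difference is that the paper phrases it as the family $h_p(x)=g(p\cdot f(x))$ and picks any two non-homotopic $p_1,p_2$, whereas you take $p_1$ to be the constant loop and $p_2=\alpha$; this is the same proof.
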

\begin{proof}
We prove the contrapositive. Suppose $E$ and $\sim_Y$ are bireducible as witnessed by $f:X\rightarrow\Omega Y$ and $g:\Omega Y\rightarrow X$. Since $E$ has at least two classes, so does $\sim_Y$. For each $p\in\Omega Y$, the map $h_p:X\rightarrow X,\ x\mapsto g(p\cdot f(x))$ is a self-reduction of $E$, where $p\cdot f(x)$ denotes concatenation in $\Omega Y$. Clearly $\lnot h_{p_1}(x)Eh_{p_2}(x)$ for non-homotopic $p_1,p_2$.
\end{proof}

\begin{corollary}
If $E$ is an analytic equivalence relation on a standard Borel space $X$ with exactly one non-Borel class, then $E$ is not Borel bireducible with $\sim_Y$ for any path-connected Polish space $Y$.
\end{corollary}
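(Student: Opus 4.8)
The plan is to derive the corollary directly from Lemma~\ref{obstruction}, so it suffices to verify its hypothesis for $E$: that for any two Borel reductions $h_1,h_2$ of $E$ to itself there is $x$ with $h_1(x)\mathrel{E}h_2(x)$. Write $C$ for the unique non-Borel $E$-class. First note that $E$ has at least two classes: if it had just one, that class would be all of $X$, hence Borel, leaving no non-Borel class. So the substance of the argument is the following claim about self-reductions.

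\emph{Claim:} every Borel reduction $h\colon X\to X$ of $E$ to itself satisfies $h^{-1}(C)=C$. I would prove this using two routine observations. (a) Since $h$ is a reduction, for any $E$-class $D$ the set $h^{-1}(D)$ is $E$-invariant and any two of its points are $E$-related, so $h^{-1}(D)$ is either empty or exactly one $E$-class. (b) Each $E$-class is analytic, being a section of the analytic set $E\subseteq X\times X$, and preimages of Borel sets under Borel maps are Borel. Now fix $x\in C$ and suppose toward a contradiction that $h(x)\notin C$. Then $D:=[h(x)]_E$ is an $E$-class distinct from $C$, hence Borel by hypothesis, so $h^{-1}(D)$ is Borel; but by (a), $h^{-1}(D)$ is a single $E$-class, and it contains $x$, so $h^{-1}(D)=[x]_E=C$ — contradicting that $C$ is non-Borel. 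Hence $h(x)\in C$ for every $x\in C$, i.e. $C\subseteq h^{-1}(C)$; since $h^{-1}(C)$ is a single $E$-class containing $C$, it equals $C$.

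Granting the claim, the hypothesis of Lemma~\ref{obstruction} is immediate: for any Borel self-reductions $h_1,h_2$ and any $x\in C$ (the class $C$ being nonempty) we have $h_1(x),h_2(x)\in C$, so $h_1(x)\mathrel{E}h_2(x)$. The lemma then gives that $E$ is not Borel bireducible with $\sim_Y$ for any path-connected Polish space $Y$. I expect the only genuinely delicate point to be the claim, specifically the impossibility of realizing the non-Borel class $C$ as the $h$-preimage of a Borel class; this also automatically handles the degenerate case $h^{-1}(C)=\emptyset$, since the argument applied to any $x\in C$ already forces $h(x)\in C$. Everything else is bookkeeping with analytic and Borel sets.
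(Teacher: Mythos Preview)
Your proof is correct and follows exactly the paper's approach: the paper's one-line proof is ``Any self-reduction of $E$ must send the unique non-Borel class to itself,'' and your Claim is precisely a fleshed-out version of this, after which Lemma~\ref{obstruction} applies just as you say. The only difference is level of detail (and your observation that each class is analytic in (b) is not actually used).
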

\begin{proof}
Any self-reduction of $E$ must send the unique non-Borel class to itself.
\end{proof}

The standard analytic equivalence relation with exactly one non-Borel class is $\mathrm{id}_{\omega_1}$. It is the analytic equivalence relation on $LO$, the Polish space of countable linear orders, where two linear orders are equivalent if either they are both ill-founded or they are isomorphic. This example is a bit artificial, since for general analytic equivalence relations (as opposed to Borel or orbit equivalence relations) one often consider the more general notion of $\Delta^1_2$ reduction, and it is not unreasonable to expect $\mathrm{id}_{\omega_1}$ to be $\Delta^1_2$ bireducible with $\sim_Y$ for some $Y$.

Let $\mathrm{id}_\mathbb{R}\sqcup\mathrm{id}_{\omega_1}$ be the disjoint union of $\mathrm{id}_\mathbb{R}$ and $\mathrm{id}_{\omega_1}$ defined on $\mathbb{R}\sqcup LO$. We show that $\mathrm{id}_\mathbb{R}\sqcup\mathrm{id}_{\omega_1}$ is not bireducible with any $\sim_Y$ in any reasonable sense: Borel, $\Delta^1_2$, projective, etc., at least assuming some large cardinal.

\begin{proposition}
If all subsets of $\mathbb{R}$ are Lebesgue measurable, then $\mathbb{R}\sqcup\omega_1$ does not admit a group structure, so $\mathrm{id}_\mathbb{R}\sqcup\mathrm{id}_{\omega_1}$ is not bireducible with any $\sim_Y$. In particular this is true in $L(\mathbb{R})$ under large cardinals.
\end{proposition}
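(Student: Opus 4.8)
The plan is to isolate the set-theoretic heart of the statement, namely that under LM the set $\mathbb{R}\sqcup\omega_1$ carries no group structure, and then prove it directly. Granting this, suppose $\mathrm{id}_\mathbb{R}\sqcup\mathrm{id}_{\omega_1}$ were bireducible with some $\sim_Y$ — by Borel maps, or indeed by any functions, the two reductions induce injections in both directions between the quotient sets. The quotient of $\mathrm{id}_\mathbb{R}\sqcup\mathrm{id}_{\omega_1}$ is canonically in bijection with $\mathbb{R}\sqcup\omega_1$ (the order-type map identifies the well-founded $\mathrm{id}_{\omega_1}$-classes with $\omega_1$, there is one further class for the ill-founded orders, and $\omega_1$ is in bijection with $\omega_1+1$), whereas the quotient of $\sim_Y$ is the group $\pi(Y)$. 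By the Schr\"oder--Bernstein theorem (a theorem of ZF) there is then a bijection $\mathbb{R}\sqcup\omega_1\to\pi(Y)$, along which the group operation transports, a contradiction. The entire argument runs inside $L(\mathbb{R})$, which satisfies ZF + DC + LM under suitable large cardinals, giving the last sentence.

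For the heart, I would first record the one ambient input needed: LM implies $\omega_1\not\leq 2^{\aleph_0}$, i.e. there is no injection $\omega_1\hookrightarrow\mathbb{R}$ (hence in particular $\mathbb{R}$ is not well-orderable). This is classical: if $A=\{r_\alpha:\alpha<\omega_1\}$ is a set of distinct reals of positive measure, the set $\{(r_\alpha,r_\beta):\alpha<\beta\}$ has all horizontal sections countable and all vertical sections co-countable in $A$, contradicting Fubini; the remaining null case is Raisonnier's theorem, and in the large-cardinal setting one may instead invoke the perfect set property, which no uncountable well-orderable set of reals enjoys. Now assume toward a contradiction that $(G,\cdot)$ is a group with $|G|=|\mathbb{R}\sqcup\omega_1|$; fix a bijection and write $G=A\sqcup B$ with $A$ in bijection with $\mathbb{R}$ and $B$ well-ordered in type $\omega_1$. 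Put $H=\langle B\rangle$. Since $B$ is well-orderable, $H$ is the countable union of the images under the $n$-fold product map of the finite powers of $B\cup B^{-1}\cup\{1\}$, each a well-orderable set of size $\leq\aleph_1$; by countable choice $H$ is well-orderable, and $|H|=\aleph_1$ as $B\subseteq H$. Then the dichotomy: if $H=G$, then the subset $A$ of $G$ is well-orderable, contradicting that $\mathbb{R}$ is not well-orderable; if $H\subsetneq G$, pick $g\in G\setminus H$, so the right coset $Hg$ is disjoint from $H\supseteq B$, whence $Hg\subseteq G\setminus B=A$, while $h\mapsto hg$ shows $|Hg|=|H|=\aleph_1$ — an injection of $\omega_1$ into the copy of $\mathbb{R}$, contradicting $\omega_1\not\leq 2^{\aleph_0}$. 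Either way we are done.

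The group-theoretic mechanism is short and robust, so the step I expect to require the most care is the ambient input $\omega_1\not\leq 2^{\aleph_0}$ under LM, and, relatedly, keeping track of exactly which fragment of choice is invoked — only countable choice / DC, which is precisely why the conclusion is safely available in $L(\mathbb{R})$ under large cardinals. A minor point to state carefully is that the obstruction is insensitive to the definability class of the reductions (Borel, $\Delta^1_2$, projective, $\dots$), since the argument uses only that a reduction is a function between the underlying sets.
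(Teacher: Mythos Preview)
Your proof is correct and follows the same overall architecture as the paper: reduce via Schr\"oder--Bernstein to a group structure on $\mathbb{R}\sqcup\omega_1$, then derive a contradiction from the absence of injections in either direction between $\mathbb{R}$ and $\omega_1$ under LM. The difference lies only in the group-theoretic step. The paper uses the classical Hajnal--Kert\'esz argument: for each $r\in\mathbb{R}$ there must be some $\alpha\in\omega_1$ with $r\cdot\alpha\in\omega_1$ (else $\alpha\mapsto r\cdot\alpha$ injects $\omega_1$ into $\mathbb{R}$), and then $r\mapsto(\alpha,r\cdot\alpha)$ with $\alpha$ least injects $\mathbb{R}$ into $\omega_1\times\omega_1$ by cancellation. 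Your route --- form $H=\langle B\rangle$, argue it is well-orderable of size $\aleph_1$, and split on $H=G$ versus $H\subsetneq G$ --- is equally valid and perhaps more structural, but it spends a little countable choice to well-order the union $\bigcup_n f_n((B\cup B^{-1}\cup\{1\})^n)$, whereas the paper's argument is pure ZF once the non-injectability facts are granted. Since DC is available in $L(\mathbb{R})$ anyway, this costs nothing for the intended application; it is worth being aware, though, that the Hajnal--Kert\'esz version gives the sharper ZF statement that $A\sqcup B$ carries no group structure whenever $A$ and $B$ are mutually non-injectable.
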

\begin{proof}
It is well known that ``all subsets of $\mathbb{R}$ are Lebesgue measurable'' implies there is no injection between $\mathbb{R}$ and $\omega_1$, which implies $\mathbb{R}\sqcup\omega_1$ is not a group by a standard argument; see \cite{hajnal1972some}. For completeness we include the proof. Suppose for contradiction that $(\mathbb{R}\sqcup\omega_1,\cdot)$ is a group. Then for any $r\in\mathbb{R}$ there exists an $\alpha$ such that $r\cdot\alpha\in\omega_1$, since otherwise $\alpha\mapsto r\cdot\alpha$ is an injection of $\omega_1$ into $\mathbb{R}$. Then we can define an injection from $\mathbb{R}$ into $\omega_1\times\omega_1$, and thus into $\omega_1$, by sending $r$ to $(\alpha,r\cdot\alpha)$ where $\alpha$ is the least such that $r\cdot\alpha\in\omega_1$; this is an injection because of cancelltion law.

If $\mathrm{id}_\mathbb{R}\sqcup\mathrm{id}_{\omega_1}$ were bireducible with $\sim_Y$, then by Cantor--Bernstein there would be a bijection between the quotient sets, namely $\mathbb{R}\sqcup\omega_1$ and $\pi(Y)$, which makes $\mathbb{R}\sqcup\omega_1$ a group, a contradiction.
\end{proof}

Simon Thomas has pointed out a $\mathsf{ZFC}$ example of a countable Borel equivalence relation $E$ with an ergodic measure, such that for any self reduction $f$ of $E$, we have $f(x)E x$ on a measure one set; in particular Lemma \ref{obstruction} applies. See \cite[Theorem 4.2]{gefter1996outer}, \cite[Lemma 3.4]{thomas2002some} and \cite[Theorem 4.4]{thomas2003superrigidity}.

\section{Realization}

To realize analytic equivalence relations as homotopy, we first recall a result of Becker: any analytic equivalence relation can be realized as the path-connectedness relation on a Polish space, in fact a compact subset of $\mathbb{R}^3$. This is stated as \cite[Theorem 4.1]{becker1998number} with hints on the proof. Below we present a somewhat more detailed proof since our main construction is based on Becker's.

If $X$ is a Polish space, then the collection $\mathcal{K}(X)$ of compact subsets of $X$ is also a Polish space under Hausdorff distance, or equivalently under Vietoris topology. The following facts are standard.

\begin{lemma}\label{lemma_compact union}
Let $X$ be Polish. 

(i) If $\mathcal{F}\subseteq\mathcal{K}(X)$ is compact then $\bigcup\mathcal{F}$ is compact.

(ii) If $Z$ is compact and $f:Z\rightarrow\mathcal{K}(X)$ is continuous, then $\bigcup_{z\in Z}\{z\}\times f(z)$ is compact.
\end{lemma}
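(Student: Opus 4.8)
The plan is to prove both parts by sequential compactness, which is legitimate since $X$, and hence any subset of $X$ or of $Z\times X$, is metrizable; fix a compatible metric $d$ on $X$ and equip $Z\times X$ with a max-type metric. Throughout, $d_H$ denotes the associated Hausdorff metric on the relevant hyperspace.

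For (i), I would take an arbitrary sequence $(x_n)$ in $\bigcup\mathcal{F}$ and for each $n$ choose $K_n\in\mathcal{F}$ with $x_n\in K_n$. Since $\mathcal{F}$ is compact, after passing to a subsequence we may assume $K_n\to K$ in $d_H$ for some $K\in\mathcal{F}$. Because $x_n\in K_n$ and $K_n\to K$, the distance from $x_n$ to $K$ tends to $0$, so we may pick $y_n\in K$ with $d(x_n,y_n)\to 0$. As $K$ is compact, a further subsequence of $(y_n)$ converges to some $y\in K$, whence the corresponding subsequence of $(x_n)$ also converges to $y\in K\subseteq\bigcup\mathcal{F}$. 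Thus $\bigcup\mathcal{F}$ is sequentially compact, hence compact.

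For (ii), the cleanest route is to reduce to (i). First observe that the map $Z\to\mathcal{K}(Z\times X)$ sending $z$ to $\{z\}\times f(z)$ is continuous: it factors as $z\mapsto(\{z\},f(z))$, continuous by continuity of $f$ and of $z\mapsto\{z\}$, followed by the product map $\mathcal{K}(Z)\times\mathcal{K}(X)\to\mathcal{K}(Z\times X)$, $(A,B)\mapsto A\times B$, which is $1$-Lipschitz for the max-metric since a short computation gives $d_H(A\times B,\,A'\times B')\leq\max\{d_H(A,A'),d_H(B,B')\}$. Since $Z$ is compact, the image $\mathcal{G}=\{\{z\}\times f(z):z\in Z\}$ is a compact subset of $\mathcal{K}(Z\times X)$, and $\bigcup\mathcal{G}=\bigcup_{z\in Z}\{z\}\times f(z)$, so part (i) concludes the proof. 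Alternatively one can argue (ii) directly by the same diagonal-extraction scheme as in (i): given $(z_n,x_n)$ with $x_n\in f(z_n)$, extract a subsequence with $z_n\to z$, use continuity of $f$ to get that the distance from $x_n$ to $f(z)$ tends to $0$, and extract once more using compactness of $f(z)$.

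None of this is genuinely hard; the two points that deserve a little care are the continuity of $z\mapsto\{z\}\times f(z)$, i.e. the Lipschitz estimate for the product map on hyperspaces, and, in part (i), the check that the limit point lies in $\bigcup\mathcal{F}$ itself rather than merely in its closure — both handled above.
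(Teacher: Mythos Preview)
Your proof is correct. The paper itself does not prove this lemma: it merely states ``The following facts are standard'' and moves on. Your sequential-compactness argument for (i) and the reduction of (ii) to (i) via continuity of $z\mapsto\{z\}\times f(z)$ are both valid and constitute a clean, complete proof of what the paper leaves as folklore.
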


\begin{theorem}[Becker]\label{Becker}
If $E$ is any analytic equivalence relation on the middle-third Cantor set $\mathcal{C}$, then there is a compact subset $K\subseteq\mathbb
R^3$ such that the path-connectedness relation on $K$ is Borel bireducible with $E$.
\end{theorem}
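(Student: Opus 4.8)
The plan is to realize $E$ as the relation of lying in the same path-component of a compact $K\subseteq\mathbb R^3$ built by attaching a controlled family of arcs to a fixed copy of the Cantor set. First I would fix a homeomorphic copy $\iota(\mathcal C)\subseteq\mathbb R^3$ of $\mathcal C$ (the ``spine'') and, using that $E$ is analytic, a tree $T$ on $2\times 2\times\omega$ with $E=\{(x,y):\exists z\in\omega^\omega\ (x,y,z)\in[T]\}$; after absorbing reflexivity and symmetry into $T$ and pruning, one may assume $T$ has a convenient form in which a branch above a pair $(x,y)$ is exactly a witness for $xEy$. The space is then assembled level by level along $T$: to each node of $T$ at level $n$ (recording the node together with the length-$n$ initial segments of $x$ and $y$) one attaches a short arc of diameter on the order of $2^{-n}$, one ``end'' of which is glued to the arc attached to its predecessor — at level $0$ the ends are glued to the appropriate spine points, so that $x$ and $y$ are genuinely threaded in — while its interior is pushed off into the third coordinate in a way that encodes the node. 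A single infinite branch of $T$ then contributes a convergent concatenation of arcs shrinking to a spine point, i.e.\ an honest arc from $\iota(x)$ to $\iota(y)$, exactly as in the Hawaiian earring space.

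For compactness I would argue level by level. The set of nodes of $T$ at level $n$, with the associated finite strings, is a compact zero-dimensional space, and the assignment of arcs to nodes is continuous into $\mathcal K(\mathbb R^3)$, so by Lemma~\ref{lemma_compact union}(ii) the union of the level-$n$ arcs is compact; since the diameters tend to $0$, the union over all levels has all of its new limit points on the spine, and $K$ — the spine together with all the arcs — is compact and, after rescaling, contained in a bounded region of $\mathbb R^3$. The third dimension is what makes this possible: one needs continuum-many arcs with pairwise disjoint interiors emanating from each spine point, and this cannot be arranged in the plane, whereas in $\mathbb R^3$ the node-encoding of arc interiors keeps distinct interiors disjoint from one another and from the spine.

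Finally I would check that the path-components of $K$ are exactly the $E$-classes: if $xEy$ a witnessing branch gives a path from $\iota(x)$ to $\iota(y)$ as above, and transitivity of $E$ is matched automatically by concatenation of paths; conversely any path in $K$ decomposes along the arcs, and the well-founded part of $T$ contributes only dead-ends, so no path-connection between distinct spine points occurs beyond those forced by $E$. Every point of $K$ lies on an arc hanging off a unique spine point, giving a map $\rho:K\to\mathcal C$; continuity of the arc family and disjointness of arc interiors make the incidence relation closed with singleton sections off the spine, so $\rho$ is Borel by Luzin--Novikov. Then $\iota:\mathcal C\to K$ is a continuous reduction of $E$ to path-connectedness and $\rho$ is a Borel reduction in the other direction, giving Borel bireducibility. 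The main obstacle is carrying out the middle step honestly — arranging the arc family so that it is simultaneously continuous with shrinking diameters (for compactness via Lemma~\ref{lemma_compact union}), free of any spurious path-connections (so that well-founded branches genuinely dead-end and path-components come out equal to, not larger than, the $E$-classes), and embeddable in only three dimensions; once the geometry is in place, the compactness bookkeeping and the two reductions are routine.
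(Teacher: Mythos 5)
There is a genuine gap, and it sits exactly where you admit the ``main obstacle'' lies: the construction of the tree-coded gadget whose path-connectivity tracks ill-foundedness. As described, your arc family is internally inconsistent. If the arc attached at level $n$ has diameter of order $2^{-n}$ and is glued only to its predecessor, then the chain along a branch converges to a single point and cannot join two \emph{distinct} spine points $\iota(x)$ and $\iota(y)$; if instead each level-$n$ arc is to span between the neighborhoods of $\iota(x\restriction n)$ and $\iota(y\restriction n)$ (which is what ``threading $x$ and $y$ in'' would require -- note that at any finite level you only know finitely many bits of $x$ and $y$, so you cannot literally glue to $\iota(x)$, $\iota(y)$), then the arcs do \emph{not} shrink and the compactness/limit analysis you give does not apply. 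More seriously, the assertion that ``the well-founded part of $T$ contributes only dead-ends'' is not an argument about the compact set you actually take: compactness forces you to include the closure of the arc system, and for a well-founded but infinite section tree the accumulation of longer and longer finite chains can produce limit arcs that \emph{do} connect $\iota(x)$ to $\iota(y)$ even though $\neg xEy$. Preventing exactly this is the whole point of the topologist's-sine-curve / zig-zag design in the gadget the paper imports from Kechris (Theorems 33.17 and 37.11): the approximating arcs oscillate so that their limit set is connected but not path-connected. Your sketch contains no mechanism playing this role, so the claimed equality of path-components with $E$-classes is unsupported precisely in the hard (well-founded) case. Relatedly, your converse direction (``any path decomposes along the arcs'') ignores paths that pass through the accumulation locus of the continuum-many gadgets clustering at each spine point; the paper rules this out with an explicit finiteness claim (Claim~\ref{claim_finiteness_1}) and a retraction collapsing segment-ends, and some substitute for that step is needed.

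For comparison, the paper does not build the gadget by hand: it takes the known continuous assignment $T\mapsto K_T\subseteq[0,1]^2$ with two marked points that are path-connected iff $T$ has a branch, fibers this over the Cantor set of pair codes $c=c_0\oplus c_1$ (so each pair gets its own gadget in a separate fiber), and then attaches straight segments from the two marked points of each fiber to a disjoint spine copy $\mathcal{C}'$; compactness is Lemma~\ref{lemma_compact union}, spurious paths are excluded by the middle-third finiteness claim, and the reduction back to $\mathcal{C}'$ is an explicit case-defined Borel selector rather than an appeal to Luzin--Novikov. Your overall architecture (spine plus per-pair tree gadgets) is the same in spirit, but the proposal would only become a proof if you either reproduce the sine-curve-type gadget construction and verify its limit behavior, or cite it as the paper does.
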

\begin{proof}
We identify $\mathcal{C}$ with ${}^{\omega}2$. Theorems 33.17 and 37.11 of \cite{kechris2012classical} describe a continuous map sending a nonempty tree $T\subseteq{}^{<\omega}\omega$ to a compact $K_T\subseteq\mathbb{R}^2$ with the following properties:

\begin{enumerate}
    \item $K_T$ is a nonempty compact subset of $[0,1]\times[0,1]$;

    \item $K_T$ has at most two path-connected components, represented by the points $(0,1)$ and $(0,0)$ respectively;

    \item $K_T$ is path-connected if and only if $T$ has a branch.
\end{enumerate}

\begin{figure}
\centering

\begin{tikzpicture}[
scale=10]
  \draw[-] (0,0) -- (1,0) node[midway,below] {$l$};
  \draw[-] (1,0) -- (1,1) node[midway,right] {$l_\emptyset$};
    \node at (0,1) [circle,fill,inner sep=1.8pt,label=left:{$r_\emptyset$}]{} ;


  \draw
    (0,1)   
    \foreach \k in {0,1,2,...,5} {
      -- ({0.14*(15*(2*\k+1)/(15+(2*\k+1)))},{0.85+0.15*(-1)}) node[circle,left] {$\k$} 
      -- ({0.14*(15*(2*\k+2)/(15+(2*\k+2)))},{0.85+0.15})
    };

    \draw ({0.14*(15*(2*5+2)/(15+(2*5+2)))},{0.85+0.15}) 
    -- ({0.14*(15*(2*6+1)/(15+(2*6+1)))},{0.85+0.15*(-1)}) node[left] {$6$}
    -- (0.98,0.78);

    \foreach \k in {0,1,2,...,6} {
     \node at ({0.14*(15*(2*\k+1)/(15+(2*\k+1)))},{0.85+0.15*(-1)}) [circle,fill,inner sep=1.8pt]{};
    };


 \foreach \k in {0,2,5}{
\draw ({0.14*(15*(2*\k+1)/(15+(2*\k+1)))},{0.85+0.15*(-1)})
-- ({0.14*(15*(2*\k+1)/(15+(2*\k+1)))},0.5) node[circle,left] {$r_{(\k)}$} ;
\node at ({0.14*(15*(2*\k+1)/(15+(2*\k+1)))},0.5)[circle,fill,inner sep=1.8pt]{};
 }

\draw (0.33,0.5) -- (0.33,0) node[pos=0.75,left]{$l_{(0)}$};

\draw (0.66,0.5) -- (0.66,0) node[pos=0.75,left]{$l_{(2)}$};

\draw (0.97,0.5) -- (0.97,0) node[pos=0.75,left]{$l_{(5)}$};


  \draw
    (0.14*15/16,0.5)   
    \foreach \k in {0,1,2} {
      -- ({0.12*15/16+0.04*(15*(2*\k+1)/(15+(2*\k+1)))},{0.4+0.1*(-1)}) 
      -- ({0.12*15/16+0.04*(15*(2*\k+2)/(15+(2*\k+2)))},{0.4+0.1})
    };

    \draw ({0.12*15/16+0.04*(15*(2*2+2)/(15+(2*2+2)))},{0.4+0.1}) 
    -- ({0.12*15/16+0.04*(15*(2*3+1)/(15+(2*3+1)))},{0.4+0.1*(-1)}) 
    -- (0.31,0.4);

    \foreach \k in {0,1,2,3} {
     \node at ({0.12*15/16+0.04*(15*(2*\k+1)/(15+(2*\k+1)))},{0.4+0.1*(-1)}) [circle,fill,inner sep=1.4pt]{};
    };

\draw ({0.12*15/16+0.04*(15*(2*1+1)/(15+(2*1+1)))},{0.4+0.1*(-1)}) -- ({0.12*15/16+0.04*(15*(2*1+1)/(15+(2*1+1)))},{0.2}) node[left]{$r_{(01)}$};

\node at ({0.12*15/16+0.04*(15*(2*1+1)/(15+(2*1+1)))},{0.2})[circle,fill,inner sep=1.4pt]{};

\draw ({0.12*15/16+0.04*(15*(2*3+1)/(15+(2*3+1)))},{0.4+0.1*(-1)}) -- ({0.12*15/16+0.04*(15*(2*3+1)/(15+(2*3+1)))},{0.2}) node[left]{$r_{(03)}$};

\node at ({0.12*15/16+0.04*(15*(2*3+1)/(15+(2*3+1)))},{0.2})[circle,fill,inner sep=1.4pt]{};


  \draw
    (0.14*15/4,0.5)   
    \foreach \k in {0,1,2} {
      -- ({0.14*15/4+0.025*(15*(2*\k+1)/(15+(2*\k+1)))},{0.4+0.1*(-1)}) 
      -- ({0.14*15/4+0.025*(15*(2*\k+2)/(15+(2*\k+2)))},{0.4+0.1})
    };

    \draw ({0.14*15/4+0.025*(15*(2*2+2)/(15+(2*2+2)))},{0.4+0.1}) 
    -- ({0.14*15/4+0.025*(15*(2*3+1)/(15+(2*3+1)))},{0.4+0.1*(-1)}) 
    -- (0.65,0.4);

    \foreach \k in {0,1,2,3} {
     \node at ({0.14*15/4+0.025*(15*(2*\k+1)/(15+(2*\k+1)))},{0.4+0.1*(-1)}) [circle,fill,inner sep=1.4pt]{};
    };

\draw ({0.14*15/4+0.025*(15*(2*0+1)/(15+(2*0+1)))},{0.4+0.1*(-1)}) -- ({0.14*15/4+0.025*(15*(2*0+1)/(15+(2*0+1)))},{0.2}) node[left]{$r_{(20)}$};

\node at ({0.14*15/4+0.025*(15*(2*0+1)/(15+(2*0+1)))},{0.2})[circle,fill,inner sep=1.4pt]{};


  \draw
    (0.14*165/26,0.5)   
    \foreach \k in {0,1} {
      -- ({0.14*165/26+0.017*(15*(2*\k+1)/(15+(2*\k+1)))},{0.4+0.1*(-1)}) 
      -- ({0.14*165/26+0.017*(15*(2*\k+2)/(15+(2*\k+2)))},{0.4+0.1})
    };

    \draw ({0.14*165/26+0.017*(15*(2*1+2)/(15+(2*1+2)))},{0.4+0.1}) 
    -- ({0.14*165/26+0.017*(15*(2*2+1)/(15+(2*2+1)))},{0.4+0.1*(-1)}) 
    -- (0.96,0.4);

    \foreach \k in {0,1,2} {
     \node at ({0.14*165/26+0.017*(15*(2*\k+1)/(15+(2*\k+1)))},{0.4+0.1*(-1)}) [circle,fill,inner sep=1.4pt]{};
    };

\draw ({0.14*165/26+0.017*(15*(2*2+1)/(15+(2*2+1)))},{0.4+0.1*(-1)}) -- ({0.14*165/26+0.017*(15*(2*2+1)/(15+(2*2+1)))},{0.2}) node[left]{$r_{(52)}$};

\node at ({0.14*165/26+0.017*(15*(2*2+1)/(15+(2*2+1)))},{0.2})[circle,fill,inner sep=1.4pt]{};

\end{tikzpicture}

	\caption{}
	\label{fig:slice}
\end{figure}

Figure \ref{fig:slice} shows the initial steps of the construction of $K_T$ for a typical $T$. It is copied from \cite[Figure 33.7]{kechris2012classical}, except that the left segment $p$ is deleted. The idea is that for each $s\in T$, we add a zig-zag version of topologist's sine curve starting at $r_s$, accumulating on the vertical segment $l_s$.

For $c_0,c_1\in {}^{\omega}2$, let $c_0\oplus c_1$ be the string with $c_0$ on even digits and $c_1$ on odd digits. Also define $p,q:{}^{\omega}2\rightarrow{}^{\omega}2$ by $p(c)(n)=c(2n)$ and $q(c)(n)=c(2n+1)$. By the identification we may view these as maps on $\mathcal{C}$. Let $A=\{c_0\oplus c_1:c_0Ec_1\}$, so $c\in A\Leftrightarrow p(c)Eq(c)$.

Since $A$ is analytic, there is a tree $T$ on $2\times\omega$ such that $c\in A\Leftrightarrow T_c$ has a branch. Let $K_0$ be $\bigcup_{c\in\mathcal{C}}K_{T_c}\times\{c\}$. Since the maps $c\mapsto T_c$ and $T\mapsto K_T$ are both continuous, $K_0$ is compact by Lemma \ref{lemma_compact union}. We have

\begin{center}
$c\in A\Leftrightarrow T_c$ has a branch $\Leftrightarrow$ $(0,0,c)$ and $(0,1,c)$ are path-connected in $K_0$.
\end{center}

\begin{figure}
    \centering

\tdplotsetmaincoords{78}{-35}
\begin{tikzpicture}
		[scale=4,
        tdplot_main_coords,
			cube/.style={very thick,black},
            cantor/.style={very thick,black,dashed},
			grid/.style={very thin,gray},
			axis/.style={->,blue,thin}]

	\draw[axis] (-1.5,0,0) -- (1.5,0,0) node[anchor=west]{$x$};
	\draw[axis] (0,-0.3,0) -- (0,2,0) node[anchor=east]{$y$};
	\draw[axis] (0,0,-0.3) -- (0,0,2) node[anchor=west]{$z$};

	\draw[cube] (0,0,0) -- (1,0,0) -- (1,1,0) ;
    \draw[cube] (0,0,0.66) -- (1,0,0.66) -- (1,1,0.66) ;
	\draw[cube] (0,0,1) -- (1,0,1) -- (1,1,1) ;

	\draw[cantor] (0,0,0) -- (0,0,1);
	\draw[cantor] (0,1,0) -- (0,1,1);
	\draw[cantor] (1,0,0) -- (1,0,1);
	\draw[cantor] (1,1,0) -- (1,1,1);

    \draw[cube]
    (0,1,1)   
    \foreach \k in {1,...,10} {
      -- ({1-1/(\k+1)},{0.75+0.25*(-1)^(\k)},1)
    };

    \draw[cube]
    (0,1,0)   
    \foreach \k in {1,...,10} {
      -- ({1-1/(\k+1)},{0.75+0.25*(-1)^(\k)},0)
    };

    \draw[cube]
    (0,1,0.66)   
    \foreach \k in {1,...,10} {
      -- ({1-1/(\k+1)},{0.75+0.25*(-1)^(\k)},0.66)
    };

    \draw[cantor] (-0.5,0.5,0) -- (-1.5,0.5,0);

    \draw (0,0,0.66) node[anchor=east]{$(0,0,c)$};

    \draw (0,1,0.66) node[anchor=east]{$(0,1,c)$};

    \draw[cube] (0,1,0.66) -- (-0.8,0.5,0) node[anchor=north]{$q(c)'$};

    \draw[cube] (0,0,0.66) -- (-1.2,0.5,0) node[anchor=north]{$p(c)'$};
    
\end{tikzpicture}

    \caption{}
    \label{fig:K}
\end{figure}

For $c\in\mathcal{C}$ let $c'=(-0.5-c,0.5,0)$, and let $\mathcal{C}'=\{c':c\in\mathcal{C}\}$. Add segments connecting each $(0,0,c)$ to $p(c)'$, and call these type I segments; similarly add segments connecting each $(0,1,c)$ to $q(c)'$, and call these type II segments. Finally let $K$ be the union of $K_0$ and these segments, as shown in Figure \ref{fig:K}, where we use dashed lines to indicate Cantor sets. Again it can be checked to be compact by Lemma \ref{lemma_compact union}.

Note that different type I/II segments (such as the two shown in Figure \ref{fig:K}) do not intersect, except possibly at their common endpoint on $\mathcal{C}'$. If the segments have different types this is because they are on different sides of the plane $y=0.5$; if they have the same type then this is because $\mathcal{C}'$ and $(0,0)\times\mathcal{C}$ lie in skew lines, similarly for $(0,1)\times\mathcal{C}$. 

This finishes the construction of $K$. We now want to show that $\sim_K$ is bireducible with $E$. To this end we argue that:

(1) for $c_0,c_1\in\mathcal{C}$, $c_0Ec_1$ if and only if $c_0'$ is path-connected to $c_1'$ in $K$;

(2) there is a Borel map sending each $x\in K$ to some point in $\mathcal{C}'$ in the same path-connected component.

Note that without (2), all we can conclude is that $E$ is isomorphic to a complete section of $\sim_K$, which may have lower Borel complexity than $\sim_K$ in general.

The following claim allows us in the proof of (1) to pretend that only finitely many type I/II segments are present.

\begin{claim}\label{claim_finiteness_1}
Divide each type I/II segment evenly into three parts. If $f:[0,1]\rightarrow K$ is continuous, then its image can only intersect the middle third of finitely many type I/II segments.
\end{claim}
\begin{proof}
If $0\leq s<t\leq1$ and $f(s),f(t)$ belong to two different type I/II segments, then there must exist $r\in[s,t]$ where $f(r)$ is either in $\mathcal{C}'$ or in $K_0$. Now suppose there is a sequence $(s_i:i<\omega)$ such that $f(s_i),\ i<\omega$ belong to the middle third of different segments; using the compactness of $[0,1]$ and passing to a subsequence, we may assume $(s_i:i<\omega)$ has a limit $s$. Then clearly $f$ is discontinous at $s$, a contradiction. 
\end{proof}

We can now finish the proof by showing (1) and (2).

(1) The forward direction is clear from construction. For the backward direction, suppose $f:[0,1]\rightarrow K$ is a path connecting $c_0'$ with $c_1'$. By the claim it only reachs the middle third of finitely many segments of type I/II. For a type I segment, say from $p(c)'$ to $(0,0,c)$, we define a continuous surjective map from the segment to itself as follows: contracts the first third onto $p(c)'$ and the last third onto $(0,0,c)$, and uniformly stretch the middle third. Also define a similar map on each type II segment. Putting these maps together we have a map $\varphi:K\rightarrow K$. Then $\varphi\circ f$ is still a path connecting $c_0'$ with $c_1'$, and it touches the interior of only finitely many type I/II segments. Then the path touches only finitely many $c'$, which must clearly be $E$-related.

(2) $K$ consists of three kinds of points: $\mathcal{C}'$, the interior points of type I/II segments, and $K_0=\bigcup_{c\in\mathcal{C}}K_{T_c}\times\{c\}$. We define a map that sends each $x\in K$ to some $c'\in\mathcal{C}'$ in the same path-component as follows. If $x\in \mathcal{C}'$ then send it to itself. If $x$ is an interior point of a segment of either type, send it to the endpoint of that segment on $\mathcal{C}'$. Now suppose $x\in K_{T_c}\times\{c\}$; referencing Figure \ref{fig:slice}, if $x$ is on the bottom side $l$ or on one of the vertical segments $l_s$, send it to $p(c)'$; otherwise, $x$ is on the ``zigzag'' part and we send it to $q(c)'$. This is easily seen to be a Borel map.
\end{proof}

In contrast to the situation in $\mathbb{R}^3$, the path-connectedness relation on a Polish $X\subseteq\mathbb{R}^2$ is either smooth or hyperfinite \cite{uyar2025complexity}.

We now define, from an analytic equivalence relation $E$ on a Polish space $X$, a new analytic equivalence relation $F(E)$ on the free group $F(X)$, and show that $F(E)$ can be realized by homotopy of loops. The definition of $F(E)$ is inspired by the proof of \cite[Corollary 4.2]{becker1998number}.

Let $W(X)={}^{^{<\omega}}(X\times\{-1,1\})$ be the Polish space of all words in $X$, viewed as the disjoint union of the Polish spaces ${}^n(X\times\{-1,1\})$. For ease of notation, if $x\in X$ then denote $(x,1)$ by $x$ and $(x,-1)$ by $x^{-1}$. Let $F(X)\subseteq W(X)$ be the free group on $X$, namely the subset consisting of all reduced words. Clearly $F(X)$ is open in $W(X)$ and hence Polish, but we are \textit{not} claiming it is a Polish group, although the group operation can be seen to be Borel using a case-by-case analysis.

Let $N_E$ be the normal subgroup of $F(X)$ generated by $\{x^{-1}y:x,y\in X,\ xEy\}$. 

\begin{definition}
$F(E)$ is the coset equivalence relation on $F(X)$ with respective to $N_E$.
\end{definition}

Namely, $uF(E)v\Leftrightarrow uN_E=vN_E\Leftrightarrow u^{-1}v\in N_E$ for $u,v\in F(X)$. Since $F(X)$ is a Borel group and $N_E$ is clearly analytic, so is $F(E)$. The quotient $F(X)/F(E)$ is naturally identified with the free group on $X/E$, so we have the suggestive formula $F(X)/F(E)\simeq F(X/E)$.

\begin{theorem}\label{theorem_realization}
For any analytic equivalence relation $E$ on $\mathcal{C}$, there is a compact path-connected $L\subseteq\mathbb{R}^5$ such that $\sim_L$ is bireducible with $F(E)$.
\end{theorem}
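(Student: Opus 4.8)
The plan is to build $L$ by a two-level mapping-torus-type construction on top of Becker's compact set $K \subseteq \mathbb{R}^3$ from Theorem \ref{Becker}. Recall that $K$ comes equipped with the Borel retraction $\rho\colon K \to \mathcal{C}'$ onto the copy of the Cantor set, with $c_0' \mathrel{\sim_K} c_1'$ iff $c_0 \mathrel{E} c_1$. The key idea is that a loop in a suitable space should encode a \emph{word} in the path-components of $K$, and homotopy of loops should correspond exactly to the normal subgroup $N_E$. First I would form the (reduced, unpointed) suspension or rather the "bouquet over $\mathcal{C}$" construction: take $K \times [0,1] \subseteq \mathbb{R}^4$, and glue in arcs so that each slice $\{x\}\times[0,1]$ becomes a loop based at a common point $*$, obtaining a space $L_0$ in which $\pi(L_0)$ is generated by the loops $\gamma_x$ for $x \in \mathcal{C}'$, with $\gamma_x$ homotopic to $\gamma_y$ precisely when $x \mathrel{\sim_K} y$, i.e. when $x \mathrel{E} y$. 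If this can be arranged so that the $\gamma_x$ are "free" subject only to these relations, then $\pi(L_0) \cong F(\mathcal{C}/E) \cong F(\mathcal{C})/F(E)$, which is exactly the quotient we want.

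The second level handles the reduction. I would use the fact (as in the proof of Theorem \ref{Becker}, parts (1) and (2), and mirroring the role of $\mathcal{C}'$ there) that there is a Borel map $\Omega L \to W(\mathcal{C}')$ reading off, from a loop $f$, the word in the $\gamma_x$'s that it traverses — concretely, by watching which arcs $f$ passes through and in which direction, using a Claim \ref{claim_finiteness_1}-style finiteness argument to guarantee that a continuous loop only winds through finitely many arcs "essentially" (i.e. up to the contracting homotopy $\varphi$). This gives a Borel reduction $F(E) \leq_B \sim_L$ after composing with the canonical word map $F(\mathcal{C}) \to \Omega L$, $w \mapsto$ the concatenated loop. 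For the reverse reduction $\sim_L \leq_B F(E)$, the word map $\Omega L \to F(\mathcal{C})$ just described should witness it: homotopic loops must read off $N_E$-equivalent reduced words, and conversely. One must check both directions carefully — that the assigned word is a Borel function of the loop, and that the homotopy-to-word-reduction passes through $N_E$ and nothing more — which is where shape-injectivity-type subtleties (à la the $\mathbb{E}$ discussion) threaten to intrude, but here the space is finite-dimensional and the relations are explicitly the conjugates of $x^{-1}y$, so a direct combinatorial argument analogous to the van Kampen computation for wedges of circles should suffice.

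The main obstacle I anticipate is the \textbf{"freeness"} of the generating loops: ensuring that $\pi(L_0)$ is genuinely the free group on $\mathcal{C}/E$ and not some quotient, i.e. that no unexpected relations among the $\gamma_x$ are introduced by the gluing. A wedge of circles indexed by $\mathcal{C}$ would give a free group, but $\mathcal{C}$ is not discrete, so the naive wedge is not Hausdorff/Polish in the right way; gluing along $K\times[0,1]$ with $K$ having its Cantor-set "spine" is the device that keeps everything compact and metrizable while still, I claim, yielding a free product over the path-components — but verifying this requires a van Kampen-with-parameters argument, or an explicit retraction of $L_0$ onto each finite subwedge, to rule out limiting relations. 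The extra dimension ($\mathbb{R}^5$ rather than $\mathbb{R}^4$) is presumably spent on embedding the gluing arcs disjointly — exactly as the passage from $\mathbb{R}^2$ to $\mathbb{R}^3$ in Becker's proof used skew lines to keep the type I/II segments disjoint — so I would budget one coordinate for the "homotopy direction" $[0,1]$ and one for routing the connecting arcs without self-intersection.

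Finally, I would verify compactness of $L$ by the same Lemma \ref{lemma_compact union} bookkeeping used for $K$: the connecting arcs are parametrized continuously (in fact by a compact parameter space built from $\mathcal{C}$ and $K$), so their union with $K\times[0,1]$ is compact; path-connectedness of $L$ is immediate once every slice loop passes through the common basepoint $*$. Assembling (1) the identification $\pi(L) \cong F(\mathcal{C})/F(E)$, (2) the Borel word map $\Omega L \to F(\mathcal{C})$, and (3) the Borel loop-construction map $F(\mathcal{C}) \to \Omega L$ then yields the bireducibility $\sim_L \,\sim_B\, F(E)$.
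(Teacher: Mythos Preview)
Your approach is correct and essentially the paper's: the paper takes $K\times S^1$ together with a single cone on $K\times\{a\}$ (with apex $b$), a minor variant of your $K\times[0,1]$ with both ends coned to a shared apex, and your anticipated freeness check via projection to finite subwedges is exactly how the paper proceeds (by a quotient map collapsing finitely many compact path-component pieces to obtain a finite wedge of circles). The one notable difference is the embedding into $\mathbb{R}^5$: the paper first constructs $L$ in $\mathbb{R}^6$ and then invokes the fact that $L$ has covering dimension $2$ to embed it in $\mathbb{R}^5$, rather than routing the connecting arcs directly in a fifth coordinate as you propose.
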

\begin{proof}
We first construct such an $L$ in $\mathbb{R}^6$. Let $K\subseteq\mathbb{R}^3$ be as in Becker's theorem, so the path-connectedness relation on $K$ is bireducible with $E$. A natural option is $K\times S^1$, but there are several issues: this is not path-connected; it has more nontrivial loops than intended; if $c_0Ec_1$ then the corresponding loops are only free-homotopic and not homotopic. These can be solved by attaching a cone on $K$ to $K\times S^1$. To be concrete, we first choose a base point of $S^1$, say $a=(1,0)$. Consider $K\times S^1\times\{0\}$ in $\mathbb{R}^6$, and for each $x\in K$ add the segment between $(x,a,0)$ and $b:=(0,0,0,0,0,1)$. Let $L$ be the union of $K\times S^1\times\{0\}$ and these segments, with $b$ as the natural basepoint. Since every point in $K\times S^1\times\{0\}$ is path-connected to some point in $K\times\{a\}\times\{0\}$, which is in turn path-connected to $b$, the set $L$ is path-connected, and easily seen to be compact. Note that the cone also makes all loops in $K$ null-homotopic.

This finishes the definition of $L$. It is not hard to see that $K$ has covering dimension $1$, and that $L$ has covering dimension $2$. Therefore $L$ embeds into $\mathbb{R}^5$, cf. \cite[Theorem 50.5]{munkres2000topology}. It remains to show that $\sim_L$ is bireducible with $F(E)$.

For each $c\in\mathcal{C}$, let $g_c$ be the natural loop in $L$ starting at $b$, then goes to $(c',a,0)$, traverse the circle $\{c'\}\times S^1\times\{0\}$ once (in a fixed orientation) and goes back to $b$. Because of the cone added on $K$, if $c_0Ec_1$ then $g_{c_0}$ and $g_{c_1}$ are homotopic. To show that $\sim_L$ is bireducible with $F(E)$, it suffices to show that: 

(1) there is a Borel map sending an arbitrary loop in $\Omega L$ to a concatenation of powers of finitely many $g_{c}$'s, so in particular the $g_{c}$'s generate $\pi(L)$;

(2) if $c_0,\cdots,c_n$ are mutually $E$-unrelated, then $g_{c_0},\dots,g_{c_n}$ generate a free group in $\pi(L)$.

The arguments are elementary, albeit cumbersome, so we will omit some details. The groupoid version of Seifert-Van Kampen theorem \cite{brown2006topology} can be used to give a conceptually clearer proof of (2). For brevity let $CK$ denote the cone on $K$, namely the union of $K\times\{a\}\times\{0\}$ and the part of $L$ with positive last coordinate.

(1) Suppose $f:[0,1]\rightarrow L$ is continuous and $f(0)=f(1)=b$. The set $f^{-1}(L\setminus CK)$ is open, hence a countable disjoint union of open intervals. On the closure $[s_0,s_1]$ of each of these intervals, $f$ can be identified with the product of two maps $g:[s_0,s_1]\rightarrow K$ and $h:[s_0,s_1]\rightarrow S^1$, where $h(s_0)=h(s_1)=a$. Note that $h$ is null-homotopic unless it touches the ``left half'' of $S^1$, which can happen only for finitely many intervals $[s_0,s_1]$ by an argument similar to Claim \ref{claim_finiteness_1}. Thus after a homotopy that ``collapses the left half of $S^1$'', we can make $h$ constantly $a$ (so $f$ has image in $K\times\{a\}\times\{0\}$) on all but finitely many intervals. On each of the exceptional interval $[s_0,s_1]$, we first ``slide $g$ and $h$ apart'', more precisely we homotope $g$ to be constantly $x:=g(s_0)$ on the first half and $h$ to be constantly $a$ on the second half. Then on the first half we homotope $f$ to be the segment from $f(s_0)$ to $b$ followed by suitable power of $g_c$, and then followed by the segment from $b$ to $f(\frac{s_0+s_1}{2})$; here $c\in\mathcal{C}$ is the canonical point (see the proof of Theorem \ref{Becker}) such that $c'$ and $x$ lie in the same path-component of $K$.

To summarize, we have homotoped $f$ so that there are finitely many intervals on which $f$ is a power of some $g_c$, and on the complement intervals $f$ has image in $CK$. Now on each complement interval we ``push up'' $f$ to the basepoint $b$, and then shrink the interval to a point; the result is a concatenation of powers of finitely many $g_{c}$'s as desired.

(2) Suppose towards a contradiction that $f$ is a concatenation of nonzero powers of $g_{c_0},\dots,g_{c_n}$ where $c_i$'s are mutually non-equivalent, and $H$ is an homotopy between $f$ and the constant map at $b$. Again by an argument similar to Claim \ref{claim_finiteness_1}, $\im(H)$ can intersect only finitely many path-components of $K\times S^1\times\{a\}$, and each intersection is closed. So there are finitely many path-components $P_1,\dots,P_m$ of $K$ and compact subsets $K_i\subseteq P_i$ such that $\im(H)\subseteq CK\cup\bigcup_{i=1}^n K_i\times S^1\times\{0\}=:L'$; we may assume each $c_i'$ is contained in some $P_j$. We have that $f$ is null-homotopic in $L'$.

Consider the quotient map $\pi$ on $L'$ that collapses $K_i$ to a point, and thus $K_i\times S^1$ to a circle, and also collapses $CK$ to a point. Since $K_i$ are disjoint compact subsets, the quotient space can be seen to be a wedge sum of $m$ circles, whose fundamental group is free on $m$ generators. Moreover, $\pi$ sends each $g_{c_i}$ to a generator, and thus it sends $f$ to a nontrivial element, contradicting that $f$ is null-homotopic in $L'$.
\end{proof}

Thus to show that an equivalence relation $E$ is Borel bireducible with some $\sim_X$, it suffices if $E$ is Borel bireducible with $F(E)$. We give such examples in the next section.

\begin{question}
Can the set $L$ in Theorem \ref{theorem_realization} be a compact subset of $\mathbb{R}^3$ or $\mathbb{R}^4$? Can it be locally connected?
\end{question}

If $L$ is compact, then by Hahn--Mazurkiewicz Theorem, being locally connected is the same as being locally path-connected. Note that the approach via Becker's Theorem does not seem to work.

\begin{proposition}
If $X$ is a locally path-connected Polish space, then it has at most countably many path-components. Thus the path-connectedness relation on $X$ is smooth.
\end{proposition}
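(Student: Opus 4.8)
The plan is to use the standard fact that local path-connectedness forces path-components to be open. Indeed, let $C$ be a path-component of $X$ and $x\in C$. By local path-connectedness there is a path-connected open set $U$ with $x\in U$; since any point of $U$ is joined to $x$ by a path lying in $U\subseteq X$, we get $U\subseteq C$, so $C$ is open. Hence the path-components form a partition of $X$ into nonempty open sets.

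Next I would invoke separability of $X$. Fix a countable dense set $D\subseteq X$. Each path-component is nonempty and open, so it meets $D$; and distinct path-components are disjoint. Choosing, for each path-component, a point of $D$ lying in it therefore defines an injection from the set of path-components into $D$, so there are at most $\aleph_0$ of them.

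Finally, for smoothness, enumerate the path-components as $(C_n)_{n\in\omega}$ (allowing repetitions if there are only finitely many). Each $C_n$ is open, hence Borel, so the map $r\colon X\to\omega$ sending $x$ to the unique $n$ with $x\in C_n$ is Borel, and by construction $x,y$ are path-connected iff $r(x)=r(y)$. Thus $r$ is a Borel reduction of the path-connectedness relation to $\mathrm{id}(\omega)$, so that relation is smooth (and in fact Borel, being $\bigcup_n C_n\times C_n$).

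I do not expect a genuine obstacle here; the only two points meriting care are that it is \emph{local} path-connectedness, not mere path-connectedness, that makes the components open, and that the smoothness conclusion needs the components to be Borel — which is immediate from their being open.
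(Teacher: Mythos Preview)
Your argument is correct and, if anything, more direct than the paper's. The paper does not explicitly observe that path-components are open; instead it argues by contradiction: given an uncountable set $A$ of points in distinct path-components, second countability forces $A$ to contain an accumulation point of itself, and a path-connected neighborhood of that point then swallows other points of $A$. Your route---path-components are open, disjoint nonempty open sets in a separable space are countable---is the standard textbook version and yields the same conclusion with less fuss. The paper also does not spell out the smoothness step beyond asserting it, whereas you give the explicit reduction.

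One trivial slip: you enumerate the components ``allowing repetitions'' but then refer to ``the unique $n$ with $x\in C_n$''. Either enumerate without repetition, or take the least such $n$; as written the map $r$ is ill-defined. This is cosmetic and does not affect the substance of the proof.
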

\begin{proof}
Otherwise, there exists an uncountable set $A$ of points that belong to different path-components. Since $X$ is second countable, $A$ has an accumulation point in itself, namely there is $x\in A$ such that any neighborhood of $x$ contains infinitely many other $y\in A$. But $x$ also has a path-connected neighborhood, a contradiction. 
\end{proof}

\section{Examples}

Let $E$ be an analytic equivalence relation on $X$. We first establish some basic properties of the operation $E\mapsto F(E)$. 

\begin{lemma}\label{F(E)_above_E}
$E\sqsubseteq_c F(E)$.
\end{lemma}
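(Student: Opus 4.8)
The plan is to take the most naive candidate for the embedding and verify it works. Define $\iota\colon X\to F(X)$ by $\iota(x)=(x,1)$; in the paper's convention this is just the one-letter reduced word $x$. The image of $\iota$ lies in the clopen summand ${}^1(X\times\{-1,1\})\cong X\times\{-1,1\}$ of $W(X)$, and there $\iota$ is simply $x\mapsto(x,1)$, so $\iota$ is continuous, and it is plainly injective. Thus the whole content of the lemma is the assertion that $\iota$ is a reduction, i.e.\ $xEy\iff x^{-1}y\in N_E$ for all $x,y\in X$. The forward implication is immediate, since when $xEy$ the element $x^{-1}y$ is by definition one of the generators of $N_E$.

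For the converse I would exploit the identification $F(X)/N_E\cong F(X/E)$ remarked on just after the definition of $F(E)$. Concretely: the assignment $[x]_E\mapsto xN_E$ is a well-defined map $X/E\to F(X)/N_E$ precisely because $xEy$ forces $x^{-1}y\in N_E$, so by the universal property of the free group it extends to a homomorphism $\Phi\colon F(X/E)\to F(X)/N_E$; conversely $x\mapsto[x]_E$ extends to a homomorphism $F(X)\to F(X/E)$ that kills every generator $x^{-1}y$ (with $xEy$) of $N_E$, hence factors through a homomorphism $\Psi\colon F(X)/N_E\to F(X/E)$. Checking on generators shows $\Phi$ and $\Psi$ are mutually inverse. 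Under $\Psi$ the composite $X\xrightarrow{\ \iota\ }F(X)\to F(X)/N_E\xrightarrow{\ \Psi\ }F(X/E)$ is exactly the canonical map $x\mapsto[x]_E$ of the generating set into the free group $F(X/E)$. Since that canonical map is injective — distinct free generators are distinct group elements, e.g.\ by the reduced-word normal form — we get $xN_E=yN_E\iff[x]_E=[y]_E$ in $X/E\iff xEy$, which is precisely $\iota(x)\,F(E)\,\iota(y)\iff xEy$.

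I do not anticipate any real obstacle: this is essentially a definition-chasing lemma, with no descriptive-set-theoretic or topological input beyond the trivial continuity of $\iota$. The only two points that deserve a line each are the isomorphism $F(X)/N_E\cong F(X/E)$, obtained by the routine universal-property diagram above, and the standard fact that the generating set of a free group embeds into it.
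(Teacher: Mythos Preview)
Your proposal is correct and uses precisely the same map as the paper: the paper's proof is the single sentence ``Send an $x\in X$ to the word $x$,'' and everything you write is a careful (and correct) unpacking of why that map is a continuous injective reduction. The only difference is level of detail---the paper leaves the verification that $x^{-1}y\in N_E\Rightarrow xEy$ implicit, while you justify it via the isomorphism $F(X)/N_E\cong F(X/E)$ already noted in the text.
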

\begin{proof}
Send an $x\in X$ to the word $x$.
\end{proof}

Let $F'(X)\subseteq F(X)$ consist of words without subword of form $xy^{-1}$ or $x^{-1}y$ where $xEy$. Denote the restriction of $F(E)$ to $F'(X)$ by $F'(E)$

\begin{lemma}
If $E$ is Borel then so are $F'(X)$ and $F(E)$. Moreover, $F'(E)$ is bireducible with $F(E)$.
\end{lemma}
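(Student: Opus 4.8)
The plan is to handle the three assertions in order, reducing each to a concrete combinatorial fact about reduced words. First, to see that $F'(X)$ is Borel when $E$ is Borel: membership in $F'(X)$ asks that $w$ be reduced (open, as already noted) and that for every pair of consecutive letters $(a,\varepsilon),(b,\delta)$ in $w$ with $\varepsilon=-\delta$ we do \emph{not} have $aEb$. Since each word has finite length and there are countably many length-$n$ words-spaces to consider, this is a finite boolean combination (over the finitely many adjacent pairs) of the Borel conditions ``$aEb$'' pulled back along the continuous coordinate projections $W(X)\to X$; hence Borel. That $F(E)$ is Borel when $E$ is Borel should follow from the standard fact that the normal subgroup $N_E$ generated by a Borel set of relators inside a Borel group is Borel: one writes $N_E$ as the image of a Borel map (conjugates of generators, multiplied out, then reduced) and checks the map is countable-to-one on a suitable Borel transversal, or more simply observes that $u\,F(E)\,v$ iff after replacing $u^{-1}v$ by its reduced form one can reach the identity by a \emph{bounded} number of elementary moves whose validity is Borel; the key point is that $F(E)$-classes, being cosets of $N_E$, are Borel, and the relation itself is Borel by an analysis entirely parallel to the Borelness of the group operation on $F(X)$.

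The main content is the ``moreover'': $F'(E)$ is bireducible with $F(E)$. One direction is free, since $F'(E)$ is a restriction of $F(E)$ to a Borel (indeed $F'(X)\subseteq F(X)$) invariant-enough subset — more precisely, the inclusion $F'(X)\hookrightarrow F(X)$ is already a Borel homomorphism-type reduction of $F'(E)$ into $F(E)$, because $u\,F'(E)\,v$ iff $u\,F(E)\,v$ by definition. For the other direction I would construct a Borel retraction $\rho:F(X)\to F'(X)$ with $\rho(w)\,F(E)\,w$ for all $w$. The idea: given a reduced word $w$, repeatedly locate the \emph{first} adjacent pair $(a,\varepsilon),(b,-\varepsilon)$ with $aEb$ and delete it (this is the move $x y^{-1}\mapsto e$ or $x^{-1}y\mapsto e$, which is exactly killing a generator of $N_E$, so it preserves the $F(E)$-class), then re-reduce if necessary, and iterate. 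This process terminates — each step strictly decreases word length — so $\rho(w)$ is well-defined, lies in $F'(X)$, and satisfies $\rho(w)\,F(E)\,w$. Borelness of $\rho$ follows because on each length-$n$ piece the whole procedure is a finite composition of moves each of which is Borel (the only non-clopen ingredient being the test ``$aEb$'', which is Borel by hypothesis). Then $\rho$ restricted to a transversal, or rather the composite ``include then $\rho$'', gives mutually inverse-up-to-$F(E)$ Borel maps, so the two relations are Borel bireducible.

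The step I expect to be the main obstacle is verifying that $\rho$ is \emph{well-defined independently of the choices made} — or, if one pins down the choice (``always the leftmost bad pair''), that the resulting map genuinely lands in $F'(X)$ and that no deletion can create a new non-reduced pair that then hides a bad pair one has already passed. Concretely: after deleting a bad pair the two letters that become newly adjacent might form an \emph{inverse} pair $x x^{-1}$ (forcing a further reduction) or a \emph{bad} pair $x^{\pm1} y^{\mp 1}$ with $xEy$, and one must argue the iteration still terminates and still reaches $F'(X)$; this is a small confluence/termination argument in the style of rewriting systems, combined with the observation that $N_E$ is exactly the congruence generated by the two move-types, so no matter how $\rho$ is implemented its output is $F(E)$-equivalent to its input. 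Once that bookkeeping is done, the Borelness and the bireduction are routine. I would also remark that bijectivity of $\rho$ on $F'(X)$ (it is the identity there) immediately upgrades this to the statement that $F'(E)$ is a Borel complete section of $F(E)$, which is what makes the bireduction work in both directions.
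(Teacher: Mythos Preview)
Your retraction $\rho$ (delete the leftmost bad pair, iterate) is exactly what the paper does, and your termination/Borelness analysis of $\rho$ is fine; the confluence worry is a red herring, since all you need is $\rho(w)\,F(E)\,w$ and $\rho(w)\in F'(X)$, both of which follow from ``each deletion kills a generator of $N_E$'' and ``length strictly drops''.

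The genuine gap is your argument that $F(E)$ is Borel. Neither of your two sketches works as stated. The map ``conjugates of generators, multiplied out and reduced'' onto $N_E$ is \emph{not} countable-to-one: already the preimage of the identity contains all tuples of the form $(u,x,x,\pm1)$, which is uncountable when $X$ is. And ``reach the identity by a bounded number of elementary moves'' presupposes exactly what needs proof: that if $w\in N_E$ lies in $F'(X)$ then $w$ is the empty word, i.e.\ that the deletion procedure detects membership in $N_E$. Without that, there is no a priori bound on the number of conjugates needed to exhibit $w\in N_E$.

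The paper supplies the missing idea: it first shows directly that $F'(E)$ is Borel, via the concrete characterization that two words in $F'(X)$ are $F(E)$-equivalent iff they have the same length and their letters are coordinatewise $E$-equivalent (this is immediate once you notice that a word in $F'(X)$ maps to a \emph{reduced} word of the same length in $F(X/E)$). From this, Borelness of $F(E)$ follows for free: $u\,F(E)\,v$ iff $\rho(u)\,F'(E)\,\rho(v)$, and the right-hand side is the $\rho\times\rho$ preimage of a Borel set. So the order of the argument is reversed from yours: prove $F'(E)$ Borel by the combinatorial description, build $\rho$, and then deduce both bireducibility and Borelness of $F(E)$ at once.
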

\begin{proof}
Clearly $F'(X)$ and $F'(E)$ are Borel: two words in $F'(X)$ are equivalent just in case they have the same length and are pointwise equivalent. Also, $F'(X)$ is an $F(E)$-complete section, and there is a Borel map from $F(X)$ to $F'(X)$ that sends a word to another word in the same $F(E)$-class: if there exist subwords like $xy^{-1}$ or $x^{-1}y$ where $xEy$, just delete the leftmost one, and repeat this until there is none. This shows bireducibility, and in particular $F(E)$ is Borel.
\end{proof}

\begin{lemma}\label{F(E)_preservation}
For each of the following properties, if $E$ has it then so does $F'(E)$.

(i) smooth; (ii) hyperfinite; (iii) countable; (iv) hypersmooth.
\end{lemma}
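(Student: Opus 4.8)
The plan is to unwind the concrete description of $F'(E)$ recorded in the previous lemma so that $F'(E)$ is exhibited as a countable disjoint union of equivalence relations, each of which is a finite power of $E$ restricted to an $E$-invariant Borel subset, and then to observe that each of the four listed properties is preserved under (a) finite products, (b) restriction to invariant Borel subsets, and (c) countable disjoint unions.

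First I would set up the decomposition. Write $W(X)=\bigsqcup_n{}^n(X\times\{-1,1\})$, and Borel-identify ${}^n(X\times\{-1,1\})$ with $\bigsqcup_{\vec\epsilon\in\{-1,1\}^n}X^n$ by grouping words according to their sign pattern $\vec\epsilon$. Under this identification the length-$n$ part of $F'(X)$ becomes $\bigsqcup_{\vec\epsilon}A_{n,\vec\epsilon}$, where $A_{n,\vec\epsilon}=\{(x_1,\dots,x_n)\in X^n:\lnot(x_iEx_{i+1})\text{ whenever }\epsilon_i\neq\epsilon_{i+1}\}$; this set is Borel when $E$ is, and it is $E^n$-invariant because $x_iEx_i'$ for all $i$ forces $x_iEx_{i+1}\Leftrightarrow x_i'Ex_{i+1}'$. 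By the characterization recorded in the previous lemma's proof — two words of $F'(X)$ are $F(E)$-equivalent exactly when they have the same length, the same sign pattern, and coordinatewise $E$-equivalent $X$-entries — the restriction of $F'(E)$ to the piece $A_{n,\vec\epsilon}$ equals $E^n\restriction A_{n,\vec\epsilon}$, and two pieces with distinct $(n,\vec\epsilon)$ lie in distinct $F'(E)$-classes. Hence
\[
F'(E)\;=\;\bigsqcup_{n\ge 0}\ \bigsqcup_{\vec\epsilon\in\{-1,1\}^n}\bigl(E^n\restriction A_{n,\vec\epsilon}\bigr),
\]
a countable disjoint union of finite powers of $E$ restricted to invariant Borel sets. (For (ii)--(iv) the hypothesis on $E$ already makes $E$ Borel, hence $F'(X)$ and all the $A_{n,\vec\epsilon}$ Borel; for (i) one may take $E$ merely analytic and smooth, in which case $F'(X)$ need only be coanalytic, but the reductions below still apply verbatim.)

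Next I would verify (a)--(c) for each class. Finite products: a Borel reduction $f$ of $E$ to some $\mathrm{id}_Y$ yields the reduction $(x_1,\dots,x_n)\mapsto(f(x_1),\dots,f(x_n))$ of $E^n$ to $\mathrm{id}_{Y^n}$ (smooth); if $E=\bigcup_kE_k$ with $E_k$ increasing and finite (resp.\ smooth) Borel, then $E^n=\bigcup_k(E_k)^n$ with $(E_k)^n$ again finite (resp.\ smooth) Borel and increasing, so $E^n$ is hyperfinite (resp.\ hypersmooth); and a finite product of relations with countable classes has countable classes. Restriction to an invariant Borel set $A$: restrict the reduction, or restrict each $E_k$, to $A$. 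Countable disjoint unions $\bigsqcup_kE^{(k)}$: for smooth, take the disjoint union of reductions into disjoint copies of the targets; for countable, every class of the sum is a class of some $E^{(k)}$; for hyperfinite/hypersmooth, writing $E^{(k)}=\bigcup_mE^{(k)}_m$ and setting $G_m:=\bigsqcup_{k\le m}E^{(k)}_m\ \sqcup\ \bigsqcup_{k>m}\Delta_{X_k}$ (with $\Delta_{X_k}$ the equality relation on $X_k$) gives an increasing sequence of finite (resp.\ smooth) Borel equivalence relations whose union is $\bigsqcup_kE^{(k)}$. Applying (a), then (b), then (c) — i.e.\ passing from $E$ to $E^n$, to $E^n\restriction A_{n,\vec\epsilon}$, to $F'(E)$ — settles all four cases.

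The argument is essentially bookkeeping, and I do not expect a serious obstacle once the displayed decomposition is secured. The two points that deserve a careful check are: that each $A_{n,\vec\epsilon}$ really is $E^n$-invariant, so that ``restriction to an invariant set'' is legitimately the operation being applied; and the interleaving device $G_m=\bigsqcup_{k\le m}E^{(k)}_m\sqcup\bigsqcup_{k>m}\Delta_{X_k}$ used for countable disjoint sums, where one verifies $G_m\subseteq G_{m+1}$ and that $\bigcup_mG_m$ restricted to each $X_k$ recovers $E^{(k)}$.
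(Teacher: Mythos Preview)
Your proposal is correct and follows essentially the same route as the paper: the paper writes $F'(E)$ as the restriction to $F'(X)$ of the equivalence relation $\bigsqcup_{n<\omega}(E\times 2)^n$ on $W(X)$, which is exactly your decomposition with the sign pattern $\vec\epsilon$ absorbed into the factor of $2$, and then appeals to the same closure facts (finite products, countable disjoint unions, restriction to an invariant Borel set) that you spell out in detail. The only difference is packaging and level of detail; in particular note that smoothness of $E$ already forces $E$ to be Borel, so your parenthetical caveat for case~(i) is unnecessary.
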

\begin{proof}
Consider $F=\bigsqcup_{n<\omega}(E\times 2)^n$, which can be viewed as an equivalence relation on $W(X)$. Then $F(E)$ restricted to $F'(X)$ coincides with $F$ restricted to $F'(X)$. Clearly if $E$ has one of these properties then so does $F$, and these properties are hereditary.
\end{proof}

\begin{theorem}
For each of the following analytic equivalence relation $E$, there exists a path-connected Polish space $X$ such that $E$ is Borel bireducible with $\sim_X$.

(i) $E_0$; (ii) $E_\infty$; (iii) $E_1$; (iv) the universal analytic equivalence relation.
\end{theorem}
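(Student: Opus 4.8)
The plan is to derive all four cases from Theorem \ref{theorem_realization} by showing that each of the relations $E$ in question is Borel bireducible with $F(E)$: once $E \sim_B F(E)$, the compact path-connected $L \subseteq \mathbb{R}^5$ supplied by Theorem \ref{theorem_realization} satisfies $\sim_L \sim_B F(E) \sim_B E$, which is exactly the assertion. By Lemma \ref{F(E)_above_E} we always have $E \sqsubseteq_c F(E)$, hence $E \leq_B F(E)$, so the entire content is to prove $F(E) \leq_B E$ in each case. Case (iv) is immediate: $F(E)$ is analytic, so $F(E) \leq_B E$ by universality.

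For cases (i)--(iii) the point is that $F'$ preserves exactly the structural property whose maximal element (under $\leq_B$) is the relation at hand. In each of these three cases $E$ is Borel, so $F(E)$ is Borel and $F(E) \sim_B F'(E)$. For (i), $E_0$ is hyperfinite, so $F'(E_0)$ is hyperfinite by Lemma \ref{F(E)_preservation}(ii); since a Borel equivalence relation is hyperfinite precisely when it is Borel reducible to $E_0$ (Dougherty--Jackson--Kechris), $F'(E_0) \leq_B E_0$, and therefore $F(E_0) \leq_B E_0$. For (ii), $E_\infty$ is a countable Borel equivalence relation, so $F'(E_\infty)$ is one as well by Lemma \ref{F(E)_preservation}(iii); by the universality of $E_\infty$ among countable Borel equivalence relations, $F'(E_\infty) \leq_B E_\infty$, hence $F(E_\infty) \leq_B E_\infty$. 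For (iii), $E_1$ is hypersmooth, so $F'(E_1)$ is hypersmooth by Lemma \ref{F(E)_preservation}(iv); since every hypersmooth Borel equivalence relation is Borel reducible to $E_1$ (Kechris--Louveau), $F'(E_1) \leq_B E_1$, hence $F(E_1) \leq_B E_1$.

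Combining the two directions yields $E \sim_B F(E)$ in all four cases, and Theorem \ref{theorem_realization} then finishes the proof. I do not expect a genuine obstacle here: the topological work --- manufacturing a path-connected Polish space at all --- was already carried out in Theorem \ref{theorem_realization}, and here one only needs to observe that the operation $F$ leaves the bireducibility class of these particular $E$ unchanged. The sole external inputs are the Dougherty--Jackson--Kechris characterization of hyperfiniteness, the universality of $E_\infty$ within the countable Borel equivalence relations, and the Kechris--Louveau theorem that every hypersmooth Borel equivalence relation reduces to $E_1$; correctly citing these, and checking that the benchmark relations are Borel so that the preservation lemmas apply, is the only real subtlety.
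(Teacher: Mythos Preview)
Your proposal is correct and follows essentially the same route as the paper: in each case one shows $E\sim_B F(E)$ by combining $E\leq_B F(E)$ from Lemma~\ref{F(E)_above_E} with the preservation Lemma~\ref{F(E)_preservation} (passing through $F'(E)$), and then invokes Theorem~\ref{theorem_realization}. The paper phrases (i) via the uniqueness of $E_0$ as the non-smooth hyperfinite relation rather than via the DJK characterization, but this is a cosmetic difference.
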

\begin{proof}
(i) $F(E_0)$ is essentially hyperfinite by Lemma \ref{F(E)_preservation} and non-smooth by Lemma \ref{F(E)_above_E}. It is well-known that $E_0$ is up to Borel bireducibility the unique non-smooth hyperfinite equivalence relation.

(ii) $F(E_\infty)$ is essentially countable by Lemma \ref{F(E)_preservation} and above $E_\infty$ by Lemma \ref{F(E)_above_E}.

The arguments for (iii) and (iv) are similar.
\end{proof}

\begin{theorem}
There exists a path-connected Polish space $X$ such that $\sim_X$ is smooth but not faithfully smooth.
\end{theorem}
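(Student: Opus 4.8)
The idea is to realize the required $\sim_X$ as $\sim_L$ for a space $L$ produced by Theorem~\ref{theorem_realization}, applied to a smooth equivalence relation $E$ that is visibly not faithfully smooth, and then to read off ``not faithfully smooth'' directly from the reduction the proof of Theorem~\ref{theorem_realization} supplies. For $E$ I would take the following. Fix a Borel set $D\subseteq 2^{\omega}\times 2^{\omega}$ whose first projection $A:=\{a:\exists b\,(a,b)\in D\}$ is $\mathbf{\Sigma^1_1}$-complete, hence non-Borel (such $D$ exists since every $\mathbf{\Sigma^1_1}$ set is a Borel projection). Let $E$ be ``equality of first coordinate'' on $D$, a closed equivalence relation, and identify the standard Borel space $D$ with $\mathcal C$ by a Borel isomorphism, so that $E$ is an analytic equivalence relation on $\mathcal C$ and the first-coordinate projection becomes a Borel reduction $r\colon\mathcal C\to 2^{\omega}$ of $E$ to $\mathrm{id}_{2^{\omega}}$ with image $A$. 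Thus $E$ is smooth, and $r$ induces a bijection $\mathcal C/E\to A$, hence an isomorphism $F(\mathcal C/E)\simeq F(A)$ onto the set $F(A)$ of reduced words over $A$.

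Next I would isolate two reductions of $F(E)$. Applying $r$ to each letter and then freely reducing defines a Borel map $\bar r\colon F(\mathcal C)\to{}^{<\omega}(2^{\omega}\times\{-1,1\})$ (letterwise application of the Borel map $r$, followed by the Borel free-reduction map). It factors through the canonical quotient $F(\mathcal C)\to F(\mathcal C)/N_E\simeq F(\mathcal C/E)$ followed by the injection $F(\mathcal C/E)\simeq F(A)$ above; since the kernel of the quotient is exactly $N_E$, the map $\bar r$ is a Borel reduction of $F(E)$ to the identity, with image $F(A)$. Because $A$ sits in $F(A)$ as the positive one-letter words — a relatively Borel subset of $^{<\omega}(2^{\omega}\times\{-1,1\})$ — the set $F(A)$ is non-Borel. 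In particular $F(E)$ is smooth, witnessed by $\bar r$, but $\bar r$ is not faithful. The second reduction is the ``canonical loop'' map $f\colon F(\mathcal C)\to\Omega L$ of Theorem~\ref{theorem_realization}, sending a reduced word $c_1^{\varepsilon_1}\cdots c_n^{\varepsilon_n}$ to the concatenation $g_{c_1}^{\varepsilon_1}\cdots g_{c_n}^{\varepsilon_n}$. This is continuous on each length-$n$ stratum, so Borel; it is a reduction of $F(E)$ to $\sim_L$ (here one uses that $\ker(F(\mathcal C)\to\pi(L))=N_E$, as in the proof of Theorem~\ref{theorem_realization}); and crucially it is \emph{surjective onto $\sim_L$-classes}, since by part~(1) of that proof the loops $g_c$ generate $\pi(L)$.

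Now for the main argument, run by contradiction. By Theorem~\ref{theorem_realization} we have $\sim_L\equiv_B F(E)$, so $\sim_L$ is smooth. Suppose $\sim_L$ were faithfully smooth and let $\sigma\colon\Omega L\to 2^{\omega}$ be a faithful Borel reduction; by Proposition~\ref{faithfully_smooth}(i) (whose proof is valid for any standard Borel target) the image $\sigma(\Omega L)$ is Borel. Since $f$ is onto $\sim_L$-classes and $\sigma$ is constant on them, the composite $\sigma\circ f$ is a Borel reduction of $F(E)$ to $\mathrm{id}_{2^{\omega}}$ with image exactly $\sigma(\Omega L)$, which is Borel; hence $\sigma\circ f$ is faithful by Proposition~\ref{faithfully_smooth}(i), so $F(E)$ is faithfully smooth. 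But then Proposition~\ref{faithfully_smooth}(ii) forces \emph{every} reduction of $F(E)$ to some $\mathrm{id}_Y$ to be faithful, in particular $\bar r$, and Proposition~\ref{faithfully_smooth}(i) then makes $F(A)=\bar r(F(\mathcal C))$ Borel — a contradiction. Therefore $\sim_L$ is smooth but not faithfully smooth, and $X=L$ is as required.

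The step needing the most care is this last transfer. Faithful smoothness is \emph{not} preserved under Borel bireducibility: the relation $E$ above is bireducible with $\mathrm{id}_{2^{\omega}}$ (since $A$ is uncountable $\mathbf{\Sigma^1_1}$, the perfect set argument yields a Borel injection $2^{\omega}\to D$ with distinct first coordinates), and $\mathrm{id}_{2^{\omega}}$ is faithfully smooth while $E$ is not. So one cannot merely quote ``$F(E)$ is not faithfully smooth'' together with $\sim_L\equiv_B F(E)$. The argument works only because the realizing reduction $f$ of Theorem~\ref{theorem_realization} is surjective onto equivalence classes, which is exactly what propagates ``$\sim_L$ faithfully smooth'' back to ``$F(E)$ faithfully smooth''. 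Accordingly, the points I would verify carefully against the proof of Theorem~\ref{theorem_realization} are that $f$ is genuinely Borel, is a reduction (i.e.\ $\ker(F(\mathcal C)\to\pi(L))=N_E$), and is surjective onto classes; all of this is routine given the earlier constructions, as is the Borelness of $\bar r$ and the non-Borelness of $F(A)$.
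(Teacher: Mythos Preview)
Your proof is correct and follows the same overall strategy as the paper: pick a smooth but not faithfully smooth $E$ (equality on first coordinate of a Borel set with non-Borel projection) and invoke Theorem~\ref{theorem_realization}. Two differences are worth noting. First, the paper shows that $F(E)$ is not faithfully smooth by a shorter route: the embedding $x\mapsto x$ of Lemma~\ref{F(E)_above_E} is a \emph{faithful} reduction of $E$ into $F(E)$, and faithful reductions compose, so $F(E)$ faithfully smooth would force $E$ faithfully smooth. Your argument via the explicit non-faithful reduction $\bar r$ with image $F(A)$ also works, but is a bit longer. Second---and here your write-up is actually more careful than the paper's---you explicitly justify the transfer of ``not faithfully smooth'' from $F(E)$ to $\sim_L$, using that the canonical-loop map $f$ of Theorem~\ref{theorem_realization} is surjective onto $\sim_L$-classes. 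The paper's proof literally ends at ``$F(E)$ cannot be faithfully smooth'' and leaves this step implicit; as you correctly point out, mere Borel bireducibility does not preserve faithful smoothness, so something beyond $\sim_L\equiv_B F(E)$ is needed. (An equivalent way to phrase the missing step in the paper's style: surjectivity onto classes makes $f$ itself a faithful reduction---for $F(E)$-invariant Borel $A$, the $\sim_L$-saturation of $f(A)$ and its complement $[f(F(\mathcal C)\setminus A)]_{\sim_L}$ are both analytic, hence Borel---so one can again argue by composition.)
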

\begin{proof}
Take $E$ to be a smooth but not faithfully smooth equivalence relation; for example, consider a closed $F\subseteq\mathcal{N}^2$ whose projection onto $\mathcal{N}$ is non-Borel, and define an equivalence relation $E$ on $F$ by $(x,y)E(x',y')\Leftrightarrow x=x'$. Since $E\sqsubseteq_c F(E)$, in particular $E$ faithfully reduces to $F(E)$, so $F(E)$ cannot be faithfully smooth, because faithful reductions are closed under composition.
\end{proof}

Finally we discuss the complexity of $\sim_X$ for some well-known spaces. The harmonic archipelago, denoted $\mathbb{HA}$, is obtained from the disk $(x-1)^2+y^2\leq 1$ as follows: for each $n\geq 1$, replace a disk between the $n$-th circle and the $n+1$-th circle of the earring $\mathbb{E}$ by the cone on the boundary of that disk with unit height (imagine that we push up the disk to form a hill).

Recall that $\pi(\mathbb{E})$ is isomorphic to a Borel subgroup of the Polish group $\varprojlim_n F_n$, where $F_n=\langle g_1,\dots,g_n\rangle$ is the free group on $n$ generators; we identify them from now on. We also identify $g_n$ with the sequence $(\dots,g_n,g_n,1,\dots,1)$. Now $\mathbb{E}$ is a subspace of $\mathbb{HA}$, and it is not hard to see that every loop in $\mathbb{HA}$ is homotopic to one in $\mathbb{E}$, so the natural map $\theta:\pi(\mathbb{E})\rightarrow\pi(\mathbb{HA})$ is surjective. Its kernel is characterized by \cite[Theorem 7]{fabel2005fundamental}:

\begin{theorem*}
A sequence $(w_n:n<\omega)\in\pi(\mathbb{E})$ belongs to $\ker(\theta)$ if and only if there exists $N$ such that for every $n\geq N$, if we replace each occurrence of $g_i,\ i< N$ in $w_n$ by $g_N$, the resulting (possibly non-reduced) word is equivalent to the empty word.
\end{theorem*}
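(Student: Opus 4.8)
The plan is to recast the combinatorial condition as ``$\gamma$ already dies in a truncated subspace of $\mathbb{HA}$ retaining only finitely many of the hills'', and then to prove the two implications; the forward direction is then essentially automatic, and all the difficulty sits in the converse.

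\emph{Step 1 (reformulation).} Write $\ell_n$ for the $n$-th circle of $\mathbb{E}$, so $g_n=[\ell_n]$, and for $N\ge1$ let $\mathbb{E}_{\ge N}=\bigcup_{k\ge N}\ell_k$, a subearring homeomorphic to $\mathbb{E}$. Define $\phi_N\colon\pi(\mathbb{E})\to\varprojlim_k\langle g_N,\dots,g_k\rangle$ level by level by sending $g_i$ to $g_N$ for $i<N$ and $g_j$ to $g_j$ for $j\ge N$, then freely reducing; these substitutions commute with the bonding maps, so $\phi_N$ is a well-defined homomorphism, and one checks its image lies in the subgroup $\pi(\mathbb{E}_{\ge N})$. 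Since $\pi(\mathbb{E}_{\ge N})$ sits inside $\varprojlim_k\langle g_N,\dots,g_k\rangle$, an element $(w_n)\in\pi(\mathbb{E})$ lies in $\ker\phi_N$ if and only if the reduced form of $\phi_N(w_n)$ is empty for every $n\ge N$ — precisely the condition in the statement for that $N$. As $\phi_{N+1}$ factors through $\phi_N$ (collapse $g_1,\dots,g_{N-1}$ to $g_N$, then $g_N$ to $g_{N+1}$), the kernels $\ker\phi_N$ are increasing, so the displayed condition is exactly membership in $\bigcup_N\ker\phi_N$. Thus the theorem is the equality $\ker\theta=\bigcup_N\ker\phi_N$.

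\emph{Step 2 (truncations, and the easy inclusion).} Model $\mathbb{HA}$ as $\bigl(D\setminus\bigsqcup_m\mathrm{int}\,D_m\bigr)\cup\bigsqcup_n H_n$, where $D_m$ is the removed disk in the $m$-th lune and $H_n$ the hill erected on $\partial D_n$ with apex $a_n$, so $[\partial D_n]=g_ng_{n+1}^{-1}$. For $M\ge0$ put $\mathbb{HA}_M=\mathbb{HA}\setminus\bigsqcup_{n>M}\mathrm{int}\,H_n$, the subspace in which only the first $M$ hills survive; it is compact and $\mathbb{E}\subseteq\mathbb{HA}_M\subseteq\mathbb{HA}$. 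Since $D\setminus\bigsqcup_m\mathrm{int}\,D_m$ (a disk with a null sequence of holes accumulating at a boundary point) is homotopy equivalent to $\mathbb{E}$, with $[\ell_k]=g_k$ and $[\partial D_n]=g_ng_{n+1}^{-1}$, van Kampen applied to the finitely many cone-attachments $H_1,\dots,H_M$ gives $\pi_1(\mathbb{HA}_M)\cong\pi(\mathbb{E})/\langle\langle g_ng_{n+1}^{-1}:n\le M\rangle\rangle$. A short change-of-generators argument — pass to $x_n=g_ng_{n+1}^{-1}$, equivalently view $\mathbb{HA}_M$ as a disk whose null sequence of holes has its first $M$ members capped by the hills — identifies this quotient with $\pi(\mathbb{E}_{\ge M+1})$ and shows the inclusion $\mathbb{E}\hookrightarrow\mathbb{HA}_M$ induces exactly $\phi_{M+1}$. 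Hence a loop $\gamma$ in $\mathbb{E}$ is null-homotopic in $\mathbb{HA}_M$ if and only if $\phi_{M+1}(\gamma)=1$, i.e.\ if and only if the condition of the statement holds with $N=M+1$. In particular, if that condition holds with some $N$, then $\gamma$ is null-homotopic in $\mathbb{HA}_{N-1}$, hence in $\mathbb{HA}\supseteq\mathbb{HA}_{N-1}$, so $\gamma\in\ker\theta$; this is the easy inclusion.

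\emph{Step 3 (the converse, and the main obstacle).} Conversely, let $\gamma\in\ker\theta$ and fix a null-homotopy $G\colon[0,1]^2\to\mathbb{HA}$ with the appropriate boundary behaviour. It suffices to deform $G$, rel boundary, so that its image meets $\mathrm{int}\,H_n$ for only finitely many $n$: then $G$ maps into some $\mathbb{HA}_M$, and Step 2 concludes with $N=M+1$. Wherever $G$ avoids the apex $a_n$, one can push its image out of $\mathrm{int}\,H_n$, because $H_n\setminus\{a_n\}$ deformation retracts onto $\partial H_n\subseteq D\setminus\bigsqcup_m\mathrm{int}\,D_m$; and because the hills shrink, all such pushes can be performed by a single continuous homotopy. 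So the crux is to arrange that $G$ avoids $a_n$ for all but finitely many $n$. I expect this to be the main obstacle: compactness of $[0,1]^2$ does not by itself give it, since the apexes $a_n$ accumulate at a point $a_\infty$ while $\mathbb{HA}$ is compact, so $G$ may a priori meet infinitely many apexes; moreover $\mathbb{HA}$ near $a_\infty$ is itself earring-like — a null sequence of $2$-cells together with a free arc, all clustering at $a_\infty$ — so one cannot naively ``push $G$ off $a_\infty$''. Making this work requires a careful analysis of the local structure of $\mathbb{HA}$ at $a_\infty$ and a deformation of $G$ carried out there; this is the part of Fabel's argument that does the real work, analogous in spirit to Claim~\ref{claim_finiteness_1} but considerably more delicate. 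Once $G$ has been brought inside a single $\mathbb{HA}_M$, the remainder is the bookkeeping of Step 2.
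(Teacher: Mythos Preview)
The paper does not prove this statement: it is quoted verbatim from Fabel, cited there as \cite[Theorem~7]{fabel2005fundamental}, with no argument supplied. There is therefore no proof in the present paper to compare your proposal against.

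On the merits of your outline itself: the strategy is sound and matches the architecture of Fabel's argument. Step~1 accurately recasts the combinatorial condition as membership in $\bigcup_N\ker\phi_N$. In Step~2 the van Kampen computation is actually dispensable: once you note that capping the first $M$ holes by cones is homotopy-equivalent to simply filling them in, the resulting disk-with-remaining-holes deformation retracts onto $\mathbb{E}_{\ge M+1}$, and tracing each $\ell_i$ through that retraction directly shows that $\mathbb{E}\hookrightarrow\mathbb{HA}_M$ induces $\phi_{M+1}$ on $\pi_1$ --- so you never need to verify separately that the normal closure $\langle\langle g_ng_{n+1}^{-1}:n\le M\rangle\rangle$ equals $\ker\phi_{M+1}$. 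You have correctly located the substance of the theorem in Step~3, and your candid assessment that this is where the real work lies is accurate; the rest is packaging. One caveat on your sketch there: with unit-height hills as described in this paper, the hills accumulate not at a single point $a_\infty$ but along the entire vertical segment over the wedge point, so the ``local structure near $a_\infty$'' you invoke is really the local structure along that whole segment, and any cleanup of $G$ must handle its full preimage.
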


Intuitively, the hills make all the $g_n$'s homotopic, but each particular homotopy can only use finitely many hills.

\begin{theorem}
(i) $\sim_{\mathbb{HA}}$ is Borel.

(ii) $E_1$ Borel reduces to $\sim_{\mathbb{HA}}$.
\end{theorem}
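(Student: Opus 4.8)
The plan is to route both parts through the coset equivalence relation $I/K$, where $I:=\psi(\Omega\mathbb E)$ is the Borel subgroup of $\varprojlim_n F_n$ with $I\cong\pi(\mathbb E)$ (via $\varphi$) described above, and $K:=\ker\theta$, viewed inside $I$ via $\varphi$.

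\textbf{Part (i).} I would first produce a Borel map $S\colon\Omega\mathbb{HA}\to\Omega\mathbb E$ with $f\sim_{\mathbb{HA}}S(f)$ for every $f$. Given $f$, the set $f^{-1}(\mathbb{HA}\setminus\mathbb E)$ is open, so a disjoint union of intervals $(a_i,b_i)$; since $\mathbb{HA}\setminus\mathbb E$ is a disjoint union of connected pieces (one open ``hill'' $H_n$ minus its rim $\beta_n\subseteq\mathbb E$ for each $n$, plus the degenerate limiting arc), $f$ maps each $[a_i,b_i]$ into the closure of one such piece, with $f(a_i),f(b_i)$ on the corresponding rim $\beta_{n_i}$. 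Replace $f\restriction[a_i,b_i]$ by the canonical shortest arc in the metric graph $\beta_{n_i}$ from $f(a_i)$ to $f(b_i)$, fixing a tie-breaking rule. Each $H_n$ is contractible, so $S(f)\sim_{\mathbb{HA}}f$; the point that makes this legitimate — an argument of exactly the flavour of Claim~\ref{claim_finiteness_1} — is that for every $\varepsilon>0$ only finitely many replacement arcs have diameter $\ge\varepsilon$ (if infinitely many did, their intervals would accumulate at a point of continuity of $f$, forcing their diameters to $0$). This yields continuity of $S(f)$, continuity of the glued local cone-homotopies, and, with the measurability of passing to the components of an open set, Borelness of $S$. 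Then for $f,g\in\Omega\mathbb{HA}$,
\[
f\sim_{\mathbb{HA}}g\iff S(f)\sim_{\mathbb{HA}}S(g)\iff\theta([S(f)])=\theta([S(g)])\iff\psi(S(f))^{-1}\psi(S(g))\in K.
\]
Since $\varprojlim_nF_n$ is a closed subgroup of the topological group $\prod_nF_n$, the map $(f,g)\mapsto\psi(S(f))^{-1}\psi(S(g))$ is Borel; and $K$ is Borel because Fabel's theorem gives $K=\{(w_n)\in I:\exists N\,\forall n\ge N\;\operatorname{sub}_N(w_n)=1\}$, where $\operatorname{sub}_N$ is the endomorphism of $F_n$ fixing $g_N,g_{N+1},\dots$ and sending each $g_i$ with $i<N$ to $g_N$, and each condition ``$\operatorname{sub}_N(w_n)=1$'' is clopen in $\varprojlim_nF_n$, so $K$ is the intersection of the Borel set $I$ with an $F_\sigma$ set. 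Hence $\sim_{\mathbb{HA}}$ is Borel. The main obstacle of part (i) is exactly the construction of $S$: checking the diameter-finiteness so that the hill surgery is a continuous, Borel operation.

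\textbf{Part (ii).} From the above, $\Phi:=\psi\circ S$ is a Borel surjection $\Omega\mathbb{HA}\to I$ reducing $\sim_{\mathbb{HA}}$ to $I/K$, and conversely $I/K\le_B\sim_{\mathbb{HA}}$ via an explicit Borel section $I\to\Omega\mathbb E\subseteq\Omega\mathbb{HA}$ (realize $(w_n)\in I$ as a canonical reduced transfinite word, then as a loop); so it suffices to prove $E_1\le_B I/K$. I would rephrase Fabel's theorem functorially: let $\mu_N\colon I\to\varprojlim_{n\ge N}\langle\bar g,g_{N+1},\dots,g_n\rangle$ be the homomorphism induced by wedging the first $N$ circles of $\mathbb E$ into a single circle $\bar g$. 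Then $uN_E=vN_E$ iff $\mu_N(u)=\mu_N(v)$ for some $N$, and since $\mu_{N+1}$ factors through $\mu_N$ (wedge one more circle), this is equivalent to the sequences $(\mu_N(u))_N$ and $(\mu_N(v))_N$ agreeing from some point on. Thus $I/K$, hence $\sim_{\mathbb{HA}}$, is Borel bireducible with eventual equality on $\{(\mu_N(u))_N:u\in I\}$ inside the countable product of perfect Polish groups $\prod_N\varprojlim_{n\ge N}\langle\bar g,g_{N+1},\dots,g_n\rangle$; in particular $\sim_{\mathbb{HA}}\le_BE_1$ already drops out. It then remains to build a Borel $\iota\colon(2^\omega)^\omega\to I$ for which $(\mu_N(\iota(x)))_N$ records $x$: it is enough that $\mu_N(\iota(x))$ depend, injectively enough, only on the tail $(x_N,x_{N+1},\dots)$ of $x$, for then $x\,E_1\,y\iff(\mu_N(\iota(x)))_N\,E_1\,(\mu_N(\iota(y)))_N\iff\iota(x)\,N_E=\iota(y)\,N_E$, giving $E_1\le_B I/K$ and indeed $\sim_{\mathbb{HA}}$ Borel bireducible with $E_1$.

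The construction of $\iota$ is the technical heart, and the step I expect to be hardest. Each coordinate $x_n\in2^\omega$ carries infinitely much information and so must be spread over infinitely many circles, yet $E_1$ forces altering a bounded prefix of $x$ to change the loop only by an element killed by some single $\mu_N$; this rules out the naïve recipe of assigning a disjoint infinite block of circles to each coordinate, since changing $x_0$ then perturbs arbitrarily high circles in a way no $\mu_N$ can repair. One is thus pushed to respect the direct-system structure of the $\mu_N$'s: encode $x_n$ using circles of index exceeding $n$, in a nested, telescoping fashion arranged so that the perturbation produced by any bounded prefix of $x$ collapses under the appropriate wedge map. The goal is an $\iota$ for which $(\mu_N(\iota(x)))_N$ is essentially a verbatim copy of $x$.
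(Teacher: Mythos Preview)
Your Part (i) is essentially the paper's argument, carried out with more care: you build an explicit Borel retraction $S\colon\Omega\mathbb{HA}\to\Omega\mathbb{E}$ and then reduce to showing $\ker\theta$ is Borel via Fabel's description, exactly as the paper does. The paper is terser and leaves the retraction implicit, so your added detail is welcome and correct.

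Part (ii), however, has a genuine gap. Your reformulation via the wedge maps $\mu_N$ is nice and even yields the bonus $\sim_{\mathbb{HA}}\le_B E_1$, which the paper does not claim; but you stop precisely at the step that matters, the construction of $\iota$. You correctly diagnose that the naive recipe of assigning a disjoint infinite block of circles to each coordinate fails because changing $x_0$ perturbs arbitrarily high circles --- but the paper's construction shows that disjoint blocks \emph{do} work, once one adds a single idea you are missing: conjugation by a null-homotopic seed. Concretely, the paper partitions $\mathbb{N}$ into infinite sets $A^p$, picks $a_p,b_p\in A^p$, encodes $c_p\in 2^\omega$ by a loop $f^p_{c_p}$ using circles in $A^p$ (via an almost-disjoint family), and sends $(c_p)_p$ to the infinite concatenation of the conjugates $f^p_{c_p}\,g_{a_p}g_{b_p}^{-1}\,(f^p_{c_p})^{-1}$. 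Each such factor is null-homotopic in $\mathbb{HA}$ regardless of $c_p$, because it is a conjugate of the null-homotopic element $g_{a_p}g_{b_p}^{-1}$; so altering finitely many $c_p$'s replaces finitely many initial null-homotopic factors by other null-homotopic factors, which is the ``collapse under a bounded prefix'' you were looking for. The converse direction then comes from Fabel's criterion together with the almost-disjointness. In your $\mu_N$-language, the conjugation trick is exactly what lets you spread each $x_n$ over its own infinite block while still having the resulting perturbation die under some $\mu_N$ --- the point being that the death happens not because high-index letters disappear, but because the whole block was arranged to be a conjugate of something already trivial.
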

\begin{proof}
(i) Since two loops $f,g\in\Omega\mathbb{E}$ are homotopic in $\mathbb{HA}$ if and only if $[f]\ker(\theta)=[g]\ker(\theta)$ if and only if $[f^{-1}\cdot g]\in \ker(\theta)$, it suffices to show that $\ker(\theta)$ is a Borel subset of $\pi(\mathbb{E})$. This is clear from the above theorem, since each $F_n$ is countable discrete.

(ii) Fix a partition of the positive integers into infinitely many infinite sets $A^p,\ p<\omega$. Let $a_p,b_p$ be the first two elements of $A^p$. The map sending each $c\in{}^\omega 2$ to $\{c\upharpoonright n:n<\omega\}\in\mathcal{P}({}^{<\omega}2)$ is continuous and its range is an almost disjoint family;  for each $p$ we can translate this to a continuous almost disjoint family on $A^p$, denoted $\{A^p_c:c\in{}^\omega 2\}$. Let $f^p_c$ be the loop that traverses each circle $(x-\frac{1}{n})^2+y^2=(\frac{1}{n})^2$ counterclockwise once for $n\in A^p_c$.

Define a map $\eta:{}^{\omega}({}^\omega 2)\rightarrow\Omega\mathbb{HA}$ that sends $(c_i:i<\omega)$ to the infinite concatenation of the loops $f^p_{c_p}g_{a_p}g_{b_p}^{-1}(f^p_{c_p})^{-1},\ 1\leq p<\omega$. If two sequences $\vec{c}_1,\vec{c}_2\in{}^{\omega}({}^\omega 2)$ are $E_1$-related, then since $g_{a_p}g_{b_p}^{-1}$ is null-homotopic, $\eta(\vec{c}_1)$ and $\eta(\vec{c}_2)$ are homotopic. Conversely, if $\vec{c}_1,\vec{c}_2\in{}^{\omega}({}^\omega 2)$ are $E_1$-unrelated, their images can be seen to be non-homotopic by the above theorem.
\end{proof}

The Griffiths space $\mathbb{GS}$, also known as the twin cone, is obtained by taking two copies of the cone over $\mathbb{E}$ and identifying their basepoints. By similar arguments, $\sim_{\mathbb{GS}}$ can be shown to be Borel and above $E_1$. The fundamental groups of $\mathbb{HA}$ and $\mathbb{GS}$ have recently been shown to be isomorphic \cite{corson2025double}, answering a long-standing question in the area. However, the proof involves a transfinite induction of length continuum and lots of arbitrary choices, and thus is highly non-constructive.

\begin{question}
Are $\sim_\mathbb{HA}$ and $\sim_\mathbb{GS}$ Borel bireducible?
\end{question}

\bibliographystyle{plain}
\bibliography{ref}

\end{document}